\newtheorem{theorem}{Theorem}[section]
\newtheorem{proposition}[theorem]{Proposition}
\newtheorem{corollary}[theorem]{Corollary}
\newtheorem{lemma}[theorem]{Lemma}
\newtheorem{remark}[theorem]{Remark}
\numberwithin{equation}{section}
\theoremstyle{definition}
\title[]
 {Unique continuation through hyperplane for higher order parabolic and Schr\"{o}dinger equations}
\author{Tianxiao Huang}
\address{Tianxiao Huang, School of Mathematics (Zhuhai), Sun Yat-sen University, Zhuhai, Guangdong 519082, China}
\email{htx5@mail.sysu.edu.cn}
\subjclass[2010]{35G05, 35K25, 35A02, 35B60}
\keywords{Unique continuation; Carleman estimate; higher order; Schr\"{o}dinger equations; parabolic equations.}
\begin{document}

\begin{abstract}
Consider the higher order parabolic operator $\partial_t+(-\Delta_x)^m$ and the higher order Schr\"{o}dinger operator $i^{-1}\partial_t+(-\Delta_x)^m$ in $X=\{(t,x)\in\mathbb{R}^{1+n};~|t|<A,|x_n|<B\}$, where $m$ and $n$ are any positive integers. Under certain lower order and regularity assumptions, we prove that if solutions to the linear problems vanish when $x_n>0$, then the solutions vanish in $X$. Such results are global if $n>1$, and we also prove some relevant local results.
\end{abstract}

\maketitle



\section{Introduction}

Let $m$, $n$ be any positive integers. Consider the higher order parabolic operator
\begin{equation*}
P_p(D_t,D_x)=\partial_t+(-\Delta_x)^m
\end{equation*} 
and the higher order Schr\"{o}dinger operator
\begin{equation*}
P_s(D_t,D_x)=D_t+(-\Delta_x)^m
\end{equation*}
where $(t,x)\in\mathbb{R}\times\mathbb{R}^n$, $(D_t,D_x)=i^{-1}(\partial_t,\partial_x)$ and $\Delta_x=\partial_{x_1}^2+\cdots+\partial_{x_n}^2$ is the spatial Laplacian. This paper studies the unique continuation properties of these operators through some hypersurfaces. Our main results are the following two theorems considering a  hyperplane as the spatial boundary:

\begin{theorem}\label{thm11}
	Let $X=\{(t,x)\in\mathbb{R}^{1+n};~|t|<A,|x_n|<B\}$ for some $A,B>0$. Suppose $\partial_x^\alpha u\in L^2(X)$ for $|\alpha|<2m$, $P_p(D_t,D_x)u\in L^2(X)$, and in $X$ that
	\begin{equation}\label{eq11}
	|P_p(D_t,D_x)u|\leq C\sum_{|\alpha|\leq[\frac{3m}{2}]}|\partial_x^\alpha u|.
	\end{equation}
	If $u\equiv0$ when $x_n>0$, then $u\equiv0$ in $X$.
\end{theorem}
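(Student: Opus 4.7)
The natural strategy is a Carleman estimate combined with a sliding-hyperplane iteration. By translation invariance in $(t, x')$, it suffices to prove the following local extension: for any $-B < c \leq 0$ and any $(t_0, x_0')$ with $|t_0| < A$, if $u$ vanishes in $\{x_n > c\}$ near $(t_0, x_0', c)$, then $u$ vanishes in a one-sided neighborhood of the form $\{x_n > c - \epsilon\}$ there. A standard compactness and open--closed argument on the parameter $c$ then pushes the vanishing from $c = 0$ down to $c = -B$, giving $u \equiv 0$ in $X$.

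The core of the proof is a weighted $L^2$ Carleman inequality for $P_p$ of the schematic form
\begin{equation*}
\sum_{|\alpha| < 2m} \tau^{2(2m - |\alpha|) - 1} \| e^{\tau \phi} \partial_x^\alpha v \|_{L^2}^2 \leq C \| e^{\tau \phi} P_p v \|_{L^2}^2,
\end{equation*}
valid for $v \in C_c^\infty$ supported in a small neighborhood of the base point and $\tau \geq \tau_0$, with a strictly convex weight $\phi(x_n)$ (for example $\phi(x_n) = e^{-\lambda x_n}$ for large $\lambda$). Since $\phi$ depends only on $x_n$, a partial Fourier transform in $(t, x')$ reduces this to a uniform family of one-dimensional weighted estimates in $x_n$ for the operator whose symbol, after conjugation by $e^{\tau\phi}$, is $i\sigma + (|\xi'|^2 + (D_{x_n} + i\tau\phi'(x_n))^2)^m$. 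A positive-commutator/integration-by-parts argument exploiting the convexity of $\phi$ (H\"ormander's pseudoconvexity relative to the level sets $\{\phi = \mathrm{const}\}$) would then yield the estimate.

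Once the Carleman inequality is in hand, the local extension is obtained by the classical cutoff argument: choose $\chi \in C_c^\infty$ equal to $1$ near the target point and supported in a slightly larger neighborhood, mollify $u$ using the regularity hypothesis $\partial_x^\alpha u \in L^2$ for $|\alpha| < 2m$, and apply the estimate to $v = \chi u$. Writing $P_p(\chi u) = \chi P_p u + [P_p, \chi] u$, the first piece is controlled via \eqref{eq11} by $\sum_{|\alpha| \leq [\frac{3m}{2}]} |\partial_x^\alpha u|$, which is absorbed by the Carleman gain, a positive power of $\tau$ since $[\frac{3m}{2}] < 2m$, once $\tau$ is large. The commutator $[P_p, \chi]u$ is supported where $\nabla\chi \neq 0$, a region I arrange to lie either in $\{x_n > c\}$ (where $u$ already vanishes) or in a zone where $\phi$ is strictly smaller than its value near the target point, so it is exponentially negligible as $\tau \to \infty$.

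The main obstacle will be establishing the Carleman estimate itself with precisely the right gain -- control of derivatives up to order $[\frac{3m}{2}]$ by a positive power of $\tau$. For $m = 1$ this reduces to the classical gradient-level Carleman estimate for the heat operator. For general $m$, however, the conjugated symbol is not of principal type and fails to be elliptic in the time-frequency $\sigma$, so one must argue in separate frequency regimes -- a ``parabolic'' regime where $\sigma$ dominates $|\xi|^{2m}$ and a ``spatial'' regime where it does not -- tracking carefully how the factorization of $(|\xi'|^2 + (\eta_n + i\tau\phi')^2)^m$ distributes powers of $\tau$ across the different derivative orders. The threshold $[\frac{3m}{2}]$ in \eqref{eq11} should reflect precisely what survives this frequency-splitting analysis.
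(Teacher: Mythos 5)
There is a genuine gap, and it sits exactly where the difficulty of the theorem lies: your Carleman estimate is asserted, not proved, and in the strength you claim it is almost certainly unobtainable for $m\geq2$. You propose a gain $\tau^{2(2m-|\alpha|)-1}$ for all $|\alpha|<2m$, i.e.\ a loss of only half a derivative, to be obtained by a positive-commutator/pseudoconvexity argument after conjugation. But the conjugated principal symbol is the $m$-th power $\bigl(|\xi'|^2+(\xi_n+i\tau\phi')^2\bigr)^m$, whose characteristics have multiplicity $m$; the commutator argument that gives loss $\tfrac12$ requires simple (or at worst double) non-real roots, and this is precisely the obstruction (the failure of Calder\'on-type arguments) that the paper's introduction describes. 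The paper's Lemma \ref{lm31} only achieves the weights $\tau^{2(\frac{3m}{2}-|\alpha|)}$ for $|\alpha|\leq2m$ --- a loss of $m/2$ derivatives, produced via Tr\`eves' identity and an anisotropic ellipticity argument rather than pseudoconvexity --- and this weaker gain is exactly why \eqref{eq11} is restricted to $|\alpha|\leq[\frac{3m}{2}]$. Your absorption step (``a positive power of $\tau$ since $[\frac{3m}{2}]<2m$'') relies on the too-strong estimate: with the realistic gain, at the top order $|\alpha|=\frac{3m}{2}$ (even $m$) the power of $\tau$ is zero, and the paper must first rescale, $u_\epsilon(t,x)=u(\epsilon^{2m}t,\epsilon x)$, so that the coefficients $\epsilon^{2m-|\alpha|}$ supply the smallness; this step is missing from your argument. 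Your closing remark that the threshold $[\frac{3m}{2}]$ ``should reflect'' a frequency-splitting analysis concedes that the quantitative core is not carried out.

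The second gap is geometric. With a weight depending only on $x_n$ and a cutoff $\chi$ in all variables, the commutator terms coming from the $t$- and $x'$-cutoffs live at the same values of $x_n$, hence at the same value of $e^{\tau\phi}$: they are not exponentially negligible, and they cannot be arranged to lie in $\{x_n>c\}$, since your hypothesis in the local step only gives vanishing of $u$ near one point of the hyperplane. In fact your reduction to a local crossing statement (vanishing on one side near a point propagates to the other side) is strictly stronger than Theorem \ref{thm11} and is not established for $m\geq2$ --- the paper's own local result (Proposition \ref{prop51}) requires vanishing on a saddle-shaped set larger than a half space. The paper circumvents this by (i) building the time decay $-N\tfrac{t^2}{2}$ into the weight, so the $t$-cutoff errors carry a strictly smaller weight, and (ii) never cutting off in $x'$ at all (Remark \ref{rk31}, possible because $\phi$ is independent of $x'$), which uses the \emph{global} vanishing on $\{x_n>0\}$ so that the $x_n$-cutoff error on the side $x_n>\delta/2$ vanishes identically and the remaining error sits at lower weight. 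Your sliding/open--closed iteration in $c$ is unobjectionable once a uniform one-sided extension is available, but the entire content of the theorem is in that step, and as proposed it does not go through.
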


\begin{theorem}\label{thm12}
	With the same $X$, $A$ and $B$ above, suppose $\partial_tu,\partial_x^\alpha u\in L^2(X)$ for $|\alpha|\leq2m$, and in $X$ that
	\begin{equation}\label{eq12}
	|P_s(D_t,D_x)u|\leq C
	\begin{cases}
	\sum_{|\alpha|\leq[\frac{3m}{2}]}|\partial_x^\alpha u|\quad &if~m\geq3,\\
	\sum_{|\alpha|\leq2m-2}|\partial_x^\alpha u|+\sum_{|\alpha|=2m-2}|\partial_x^{\alpha+e_n}u|\quad &if~m=1,2,
	\end{cases}
	\end{equation}
	where $e_n$ is the n-th spatial unit vector. If $u\equiv0$ when $x_n>0$, then $u\equiv0$ in $X$.
\end{theorem}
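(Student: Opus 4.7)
The approach is via an $L^2$ Carleman estimate with a weight $e^{\tau\phi(x_n)}$ depending only on the normal variable, followed by a standard cutoff and limit argument. I would first prove the Carleman estimate for functions in $C_c^\infty(X)$, and then use the hypothesis $u=0$ in $\{x_n>0\}$ to force vanishing of $u$ across the hyperplane.

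For the Carleman estimate, I would choose $\phi$ with strictly positive second derivative and nonvanishing first derivative on $[-B,B]$ (for instance $\phi(x_n)=-x_n+\tfrac{\epsilon}{2}x_n^2$ for small $\epsilon>0$), and conjugate to form $P_s^\phi:=e^{\tau\phi}P_se^{-\tau\phi}$. Taking the partial Fourier transform in $(t,x')$, where $x'=(x_1,\dots,x_{n-1})$, reduces matters to a family of one-dimensional problems in $x_n$ with polynomial symbol
\[
L_\tau(\sigma,\xi',z)=\sigma+\bigl(|\xi'|^2+(z+i\tau\phi'(x_n))^2\bigr)^m
\]
parametrized by $(\sigma,\xi')\in\mathbb{R}\times\mathbb{R}^{n-1}$. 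Its $2m$ roots admit the ansatz $z=-i\tau\phi'\pm\sqrt{\zeta-|\xi'|^2}$ with $\zeta^m=-\sigma$; outside the thin resonant set $\{(|\xi'|^2+\tau^2(\phi')^2)^m\sim|\sigma|\}$ they stay at a definite distance from $\mathbb{R}$, while on the resonant set positivity of $\phi''$ is exploited by a Carleman-type integration by parts. Reassembling the one-dimensional bounds by Plancherel should yield
\[
\sum_{|\alpha|\leq N_m}\tau^{p_m(\alpha)}\|e^{\tau\phi}\partial_x^\alpha v\|_{L^2(X)}^2\leq C\|e^{\tau\phi}P_sv\|_{L^2(X)}^2,\qquad v\in C_c^\infty(X),
\]
with indices $(N_m,p_m)$ tuned so that the derivatives on the left dominate those appearing on the right of (\ref{eq12}), with a strictly positive surplus power of $\tau$.

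The unique continuation then follows by applying the estimate to $v=\chi(x_n)\psi(t)u$ with cutoffs $\chi,\psi\in C_c^\infty$ equal to $1$ on smaller intervals $[-B',B']\subset(-B,B)$ and $[-A',A']\subset(-A,A)$. Absorbing (\ref{eq12}) into the left-hand side for $\tau$ large, and noting that the commutator $[P_s,\chi\psi]u$ is supported only where the cutoffs vary, the $x_n$-commutator is concentrated in $\{x_n\leq-B'\}$ (since $u=0$ for $x_n>0$), where $\phi$ is strictly smaller than on the target slab $\{-B'<x_n<0\}$. Letting $\tau\to\infty$ forces $u=0$ on the target slab, and a finite iteration in $B'$ and $A'$ exhausts $X$.

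The main obstacle is the Carleman estimate itself. Because $P_s$, unlike $P_p$, is not hypoelliptic and its real characteristic set $\{\sigma+|\xi|^{2m}=0\}$ is a smooth hypersurface, the pseudoconvexity of $\phi(x_n)$ with respect to $P_s$ must be verified with enough quantitative margin to absorb $[3m/2]$ derivatives in the perturbation. The dichotomy between $m\geq3$ and $m=1,2$ in (\ref{eq12}) reflects a genuine loss in the low-$m$ regime, where the resonant frequency region is too large for a symmetric uniform estimate and one must trade a full gradient for the anisotropic term $\partial_x^{\alpha+e_n}u$ with $|\alpha|=2m-2$ appearing in the statement.
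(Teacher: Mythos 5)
Your outline has the right general skeleton (conjugation, a Carleman inequality, cutoff, absorption, $\tau\to\infty$), but there is a fatal gap in the unique continuation step caused by your choice of weight. Because $u$ is not compactly supported in time, you must cut off in $t$, and the commutator $[P_s,\psi(t)]u=-i\psi'(t)u$ is supported in $\{A'<|t|<A\}$ but at \emph{all} $x_n$ in the support of $\chi$, including the target slab $\{-B'<x_n<0\}$. Since your weight $\phi$ depends only on $x_n$, the factor $e^{\tau\phi}$ on this error region is the same as on the target slab, so this term is not exponentially suppressed relative to the left-hand side; letting $\tau\to\infty$ yields nothing, no matter how strong the Carleman inequality is (you address only the $x_n$-commutator, which is the harmless one). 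This is precisely why the paper does not use a weight in $x_n$ alone: it takes $\phi=-N\frac{t^2}{2}+x_n+\frac{x_n^2}{2}$, strictly concave in $t$, so that the time-cutoff errors at $|t|\sim\delta_0$ carry weight at most $e^{-\tau\delta(1-\frac\delta4)}$ while the conclusion region near $t=0$ carries larger weight; the price is the negative term $-2N\tau\iint|v|^2$ in the conjugated identity, absorbed by the $\tau^{2m}$ term, and the fact that the weight must stay quadratic so that Tr\`eves' identity (Lemma \ref{lm21}) applies, with Remark \ref{rk31} removing the need to cut off in $x'$. One then gets vanishing only near $t=0$ and a fixed distance into $x_n<0$, and concludes by translating in $t$ and $x_n$. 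A $t$-independent convex weight does give an $L^2$ inequality for $P_s$ on $C_c^\infty$ (your one-dimensional root analysis is plausible), but it cannot drive the cutoff argument for this theorem. As a side remark, the sign of your model weight $\phi(x_n)=-x_n+\frac{\epsilon}{2}x_n^2$ is oriented the wrong way relative to your own description of where $\phi$ should be smaller.

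A secondary gap is the claim that the indices can be tuned so that the left-hand side dominates the right-hand side of \eqref{eq12} "with a strictly positive surplus power of $\tau$". At the borderline orders this surplus is not available: in the paper's estimates \eqref{eq31} and \eqref{eq315} the power of $\tau$ is exactly zero at $|\alpha|=\frac{3m}{2}$ ($m$ even) and at the anisotropic term $\partial_x^{\alpha+e_n}u$, $|\alpha|=2m-2$, when $m=2$. The paper compensates by the parabolic rescaling $u_\epsilon(t,x)=u(\epsilon^{2m}t,\epsilon x)$, which puts a small factor $\epsilon^{2m-|\alpha|}$ in front of each lower-order term in \eqref{eq12} so that the borderline terms can still be absorbed (see \eqref{eq41}--\eqref{eq43} in Section \ref{sec4}); your proposal contains no mechanism of this kind, so even granting your Carleman inequality the absorption step is incomplete at the top admissible orders.
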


If we look at the simplest case considering solutions to $P_p(D_t,D_x)u=0$ and to $P_s(D_t,D_x)u=0$, the claims in these theorems are actually implied \textit{locally} by the well-known Holmgren's uniqueness theorem (see \cite[Theorem 8.6.5]{H1}), because the principal symbols of $P_p(D_t,D_x)$ and of $P_s(D_t,D_x)$ are both $p(\eta,\xi)=|\xi|^{2m}$ ($(\eta,\xi)\in\mathbb{R}^{1+n}$), and the hyperplane $\{x_n=0\}$ is non-characteristic everywhere for these operators. More generally, if we consider any $C^1$ \textit{spatial boundary} $S$ in $\mathbb{R}^{1+n}$, whose normals are all orthogonal to the time axis making $S$ the most typical type of non-characteristic hypersurfaces for evolution operators like $P_p(D_t,D_x)$ and $P_p(D_t,D_x)$, the Holmgren's uniqueness theorem also implies local unique continuation property across $S$ for solutions to
\begin{equation}\label{e13}
P_p(D_t,D_x)u=\sum_{|\alpha|<2m}A_\alpha(t,x)D_x^\alpha u\quad\text{and~to}\quad P_s(D_t,D_x)u=\sum_{|\alpha|<2m}A_\alpha(t,x)D_x^\alpha u,
\end{equation}
where the lower order coefficients $A_\alpha(t,x)$ are analytic. It is then natural to ask what happens when $A_\alpha(t,x)$ are not analytic.

In spatial dimension $n=1$, where $S$ is locally only a segment parallel to the time axis, our Theorem \ref{thm11} and Theorem \ref{thm12} are local results to this problem. Actually, Isakov \cite{I} had earlier proved that when $A_\alpha(t,x)$ are locally bounded, local unique continuation holds for $H_\mathrm{loc}^{2m}$ solutions to the equations \eqref{e13} across $S$, and this is somehow a better result than ours in the sense that lower order terms up to order $2m-1$ are all allowed.

In higher spatial dimension $n>1$, this problem has been more explored in the second order case $m=1$. In \cite{I}, Isakov considered the parabolic operators
\begin{equation}\label{e14}
\partial_t-\sum_{j,k}a_{jk}(t,x)\partial_{x_j}\partial_{x_k}+b(t,x)\cdot\nabla_x+c(t,x),
\end{equation}
where $\sum_{j,k}a_{jk}(t,x)\partial_{x_j}\partial_{x_k}$ is uniformly elliptic with $C^1$ coefficients and all lower order coefficients are locally bounded, improving an earlier result of Saut and Scheurer \cite{SS}. More precisely, suppose $S$ is a $C^2$ spatial boundary defined by $\psi(x)=0$ for $(t,x)$ in a neighborhood of $0\in\mathbb{R}^{1+n}$, $0\in S$, $x_n>0$ for points $(t,x)\in S$ except those in the form $(t,0)$, and suppose $\psi(x)>0$ implies $x_n>0$. An essential result in \cite{I} is that if $u\in H_\mathrm{loc}^2$ is a solution to 
\begin{equation}\label{e15}
\partial_tu-\sum_{j,k}a_{jk}(t,x)\partial_{x_j}\partial_{x_k}u+b(t,x)\cdot\nabla_xu+c(t,x)u=0
\end{equation} 
in a neighborhood of $0$, and if $u\equiv0$ when $\psi(x)>0$, then $u\equiv0$ in some neighborhood of $0$. Since the initial vanishing of $u$ is very tiny, the local unique continuation property through spatial boundaries for parabolic operators \eqref{e14} has few essential difference to the case of analytic coefficients, and Isakov's result also imlies our Theorem \ref{thm11} when $m=1$. Isakov \cite{I} also considered the Schr\"{o}dinger equation
\begin{equation}\label{e16}
i\partial_tu-\Delta_xu+b(t,x)\cdot\nabla_xu+c(t,x)u=0
\end{equation}
with locally bounded coefficients, however, the result is much weaker than the parabolic case. It could only be shown that if $u\equiv0$ when $\psi(x)<0$, then $u\equiv0$ in some neighborhood of $0$. Geometrically speaking, the initial vanishing here is assumed to be on a place "\textit{strictly larger}" than the half space locally. 

The above second order results in the case of non-analytic coefficients were all proved by establishing certain Carleman estimates, and such a way has become the main stream in the study of unique continuation problems since Carleman first introduced it in \cite{C}. In the higher order case $m>1$, however, it is in general difficult to establish Carleman estimates for higher order operators like $P_p(D_t,D_x)$ and $P_s(D_t,D_x)$ whose Fourier symbols have zeros with high multiplicities, and unique continuation through non-characteristic hypersurfaces in higher spatial dimensions has not been much studied. We will prove Theorem \ref{thm11} and Theorem \ref{thm12} by establishing appropriate Carleman estimates, but notice that when $n>1$, our Theorem \ref{thm11} and Theorem \ref{thm12} are only global in the $x'=(x_1,\cdots,x_{n-1})$ variables, to better explain why we have such consideration, we next shortly discuss how classical ideas of proving Carleman estimates may fail when studying $P_p(D_t,D_x)$ and $P_s(D_t,D_x)$.

In the general theory of local unique continuation through a hypersurface $S=\{\psi(x)=\psi(x_0)\}$ for a differential operator $P(x,D)$, establishing the following type of Carleman estimate is crucial:
\begin{equation}\label{eq14}
\sum_{|\alpha|<\mathrm{deg}\,P}\tau^{2(\mathrm{deg}\,P-\delta-|\alpha|)}\int e^{2\tau\phi}|D^\alpha f|^2\leq C\int e^{2\tau\phi}|p(\epsilon x,D)f|^2.
\end{equation}
Here $p(x,D)$ is the principal part of $P(x,D)$, $\tau$ is a large parameter which indicates uniqueness when it goes to $+\infty$, $\epsilon$ is a small but fixed parameter which controls some errors from coefficients, $f$ is any smooth function supported in a sufficiently small neighborhood of $x_0$, and $\phi$ is a suitable weight function with a local level set lying strictly on the side of $S$ where the solution vanishes, except $x_0$ where $\nabla\phi$ points at the vanishing side. The most important parameter $\delta\geq0$ in \eqref{eq14} characterizes the \textit{loss of derivatives} in the sense that $\tau$ and $D$ have the same strength from the view of semi-classical calculus.

In most cases where \eqref{eq14} can be proved, it seems that the loss of derivatives $\delta$ cannot exceed $1$. The general study that explains this situation to the most extent may be the historical Calder\'{o}n's uniqueness theorem which, in a typical way (e.g. see H\"{o}rmander \cite[Chapter 28]{H4}), says that if

\noindent\textit{
1)$p(x,D)$ has $C^\infty$ coefficients, and the lower order terms of $P(x,D)$ have bounded coefficients;\\
2)$S$ is $C^2$, and it is non-characteristic to $P$ at $x_0$, i.e. $p(x_0,\nabla\psi(x_0))\neq0$ and $\nabla\psi(x_0)\neq0$;\\
3)There exist a neighborhood $V$ of $x_0$ and a conic neighborhood $\Gamma$ of $\nabla\psi(x_0)$, such that when $(x,\xi,N,\tau)\in V\times\mathbb{R}^d\times\Gamma\times\mathbb{C}^d$ and $\xi\notin\mathbb{R}N$, the Fourier symbol of $p$ has the factorization
\begin{equation}\label{eq13}
p(x,\xi+\tau N)=p(x,N)\prod_k(\tau-a_k(x,\xi,N))^{\sigma_k}\prod_l(\tau-b_l(x,\xi,N))^{\tilde{\sigma}_l},
\end{equation}
where the distinguished $a_k$'s and $b_l$'s are respectively real and non-real valued $C^\infty$ functions in $ V\times\mathbb{R}^d\setminus\{0\}\times\Gamma$, and the integers $\sigma_k\in\{0,1\}$ and $\tilde{\sigma}_l\in\{0,1,2\}$,
}

\noindent then we can find a suitable weight function $\phi$ such that \eqref{eq14} can be proved with $\delta=1$. Moreover, if all $\tilde{\sigma}_l\leq1$ in \eqref{eq13}, then \eqref{eq14} can be proved with $\delta=\frac12$.

Notice that the principal part $p(x,D)$ and the above three conditions are invariant under any $C^2$ coordinates change, in the proof of Carleman estimate \eqref{eq14}, we may first assume by a local diffeomorphism that $\phi$ is some \textit{convex} quadratic form which is more computable, and then estimates for factors of the conjugated operator $p(\epsilon x,D+i\tau\nabla\phi)$ give \eqref{eq14} in the new coordinates, while real and non-real factors make different contribution:

\noindent1) Each non-real factor causes $\frac12$ loss of derivatives by a standard commutator type argument, where the convexity of $\phi$ is used. If any non-real factor has multiplicity $2$, the loss of derivatives will be piled up to $1$;\\
2) The real factors don't cause loss of derivatives but the loss of some ellipticity, and consequently the left hand side of \eqref{eq14} does not include terms of order $\mathrm{deg}\,P$. 

\noindent Since the diffeomorphism generically generates all lower order errors, the left hand side of \eqref{eq14} must have positive power in $\tau$ to absorb these errors in the original coordinates by assuming $\tau>0$ large, except the case $\delta=1$ where the errors of order $\mathrm{deg}\,P-1$ are controlled by taking $\epsilon$ small. 

We remark that to check the factorization \eqref{eq13} is not always easy, and H\"{o}rmander (see \cite{H4}) had developed the well celebrated pseudo convexity theory to study the case $\delta=\frac12$ by considering a more straightforward relation between $\phi$ and $p$. The more general case $\frac12\leq\delta<1$ has also been considered in Lerner \cite{L}, which takes spirit from the subelliptic operator theory where ellipticity is lost in a similar manner but lower order information can still be controlled. The bi-laplacian $\Delta^2$ is a typical operator for the critical case $\delta=1$, and there has been some technical variants in related problems, for example, Rousseau and Robbiano \cite{RR} recently considered the spectral inequality for the bi-Laplacian using Carleman estimates. One expects that the loss of derivatives $\delta$ will be higher than $1$ if in \eqref{eq13} the multiplicity of some non-real factor becomes higher, and that more loss of ellipticity happens if some higher order real factor occurs, thus the above strategy of proving a Carleman estimate fails in the end. One consults Zuily's book \cite{Z} for counterexamples and some special positive results for higher order operators. 

In our cases for $P_p(D_t,D_x)$ and $P_s(D_t,D_x)$, more concretely, the failure of the above classical ideas comes from two aspects. The first is that the principal symbol $p(\eta,\xi)=|\xi|^{2m}$ has dimensional loss, then for any $N_0\in\mathbb{R}$, $0\neq N\in\mathbb{R}^n$, one can always find $(\eta,\xi)\notin\mathbb{R}(N_0,N)$ such that $\tau\mapsto p(\eta+\tau N_0,\xi+\tau N)$ has a real root with multiplicity higher than $1$. The second aspect is the high power in the spatial part of these operators, for when $m>2$, possible non-real roots must have multiplicities higher than $2$. Notice that all these reasons have nothing to do yet with the hypersurface $S$, it seems that considering local unique continuation is somehow away from being prepared for us.

However, with a similar strategy of proving \eqref{eq14}, it is still possible to consider global unique continuation for $P_p(D_t,D_x)$ and $P_s(D_t,D_x)$ like Theorem \ref{thm11} and Theorem \ref{thm12}. We are first motivated by Isakov \cite{I} respecting the dimensional loss, where Isakov considered the non-homogeneous principal part of an operator in full dimensions, and gave an argument parallel to the pseudo convexity theory of H\"{o}rmander, with typical applications to second order evolution operators and one dimensional higher order evolution operators as mentioned before. In our case, $\partial_t$ and $D_t$ should not be neglected in the non-homogeneous principal parts of $P_p(D_t,D_x)$ and $P_s(D_t,D_x)$, however, we will not emphasize such concept later but study them in a more straightforward way. The other motivation of this work is technical respecting the high power. As analyzed above, since the loss of derivatives is expected to be high, generic lower order errors that are uncontrollable must be avoided. Similar consideration for higher order elliptic operators has been more studied, for example in \cite{P}, \cite{CG} and \cite{CK}, strong unique continuation properties were considered, and iteration of Carleman estimates for second order operators was used to avoid certain lower order errors. In this work, we will not consider any generic diffeomorphic scenario, but only quadratic forms as the weight function in a Carleman estimate, and take advance of the Tr\`{e}ves' identity which produces lower terms in a completely predictable way. By considering parameterized quadratic forms, we inspect what kinds of Carleman estimate can be proved and consequently what kinds of hypersurface will allow unique continuation to cross. It turns out that quadratic forms that only depend on $t$ and $x_n$ are suitable choices even if they are not convex, and therefore the spatial boundary in Theorem \ref{thm11} and Theorem \ref{thm12} can then only be local in $t$ and $x_n$.

Such global results can be expected to serve some further global unique continuation problems. For example, Kenig, Ponce and Vega \cite{KPV} considered unique continuation for the nonlinear Schr\"{o}dinger equation
\begin{equation*}
i\partial_tu+\Delta_xu+F(u,\bar{u})=0,\quad(t,x)\in\mathbb{R}\times\mathbb{R}^n,
\end{equation*} 
and proved that if two solutions coincide in $\{0,1\}\times D$ where the cone $D$ is strictly larger than a half space, then the two solutions are identically equal. They used the result of Isakov \cite{I} for Schr\"{o}dinger equations as a second step after showing that the two solutions coincide in $[0,1]\times D'$ where the cone $D'\subset D$ is also strictly larger than a half space. Compared with Isakov's result, our Theorem \ref{thm12} shows that $D$ can also be assumed to be a half space with the same regularity assumptions in \cite{KPV}. We also mention that Ionescu and Kenig \cite{IK} later developed the $L^p$ Carleman estimates which are also global but much harder to prove, to reduce $D$ to a half space, and we refer to \cite{EKPV} for more related problems for evolution equations. We expect our Theorem \ref{thm12} to be a step to such problems in the higher order case.

As a byproduct, the Carleman estimates (see Lemma \ref{lm31} and \ref{lm32}) that we use to prove Theorem \ref{thm11} and Theorem \ref{thm12} can be slightly modified to show a local unique continuation property for $P_p(D_t,D_x)$ and $P_s(D_t, D_x)$ (see Proposition \ref{prop51}), where the initial vanishing of the solutions is assumed to be on a saddle shape set. Besides being a new local unique continuation result in the higher order case in higher dimensions, this is also a "better" result than Isakov's in \cite{I} for the Schr\"{o}dinger equations, because the saddle shape set we consider is actually a smaller initial vanishing set, while the only one flaw is that we cannot include the full gradient in the equation.

This paper is organized as follows. In Section \ref{sec2} we introduce the Tr\`{e}ves' identity, upon the use of which we also prove some error estimates. In Section \ref{sec3} we prove two Carleman estimates which are the main gadgets to prove Theorem \ref{thm11} and Theorem \ref{thm12} in Section \ref{sec4}. In Section \ref{sec5} we prove a local unique continuation result by modifying the Carleman estimates in Section \ref{sec3}, and we also give a weak unique continuation result as a corollary. 

In the sequel, we will use the notation $D=i^{-1}\nabla=i^{-1}\partial$ indicating variables by subscript. For $d$ dimensional vector $b$ and multi-index $\alpha$, we denote $b^\alpha=b_1^{\alpha_1}\cdots b_d^{\alpha_d}$. For a polynomial $P$ in $\mathbb{C}^d$ with constant coefficients, we denote $\bar{P}$ the polynomial by taking conjugation of all coefficients of $P$, and $P^{(\alpha)}$ the mixed derivative $\partial^\alpha P$.

\section{Preliminaries}\label{sec2}

The following Tr\`{e}ves' identity (see \cite[Lemma 17.2.2]{H3}), which explains the canonical commutation relation for general linear differential operators with constant coefficients, is our main tool to prove Carleman estimates:

\begin{lemma}\label{lm21}
	Let $Q(x)=\sum_{j=1}^{d}a_jx_j+\sum_{j=1}^{d}b_jx_j^2/2$ be a real quadratic polynomial in $\mathbb{R}^d$, $P$ be a $d$-dimensional polynomial with constant coefficients. Then for all $u\in C_c^\infty(\mathbb{R}^n)$ and denoted by $v=e^{Q/2}u$, we have
	\begin{equation}\label{eq21}
	\begin{split}
	\int e^Q|P(D)u|^2dx&=\int|P(D+i\nabla Q/2)v|^2dx\\
	&=\sum_{\alpha}\frac{b^\alpha}{\alpha!}\int|\bar{P}^{(\alpha)}(D-i\nabla Q/2)v|^2dx,
	\end{split}
	\end{equation}
	where the finite summation runs over all possible multi-indices $\alpha$.
\end{lemma}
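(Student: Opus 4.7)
The first equality in \eqref{eq21} is a pure conjugation identity. Writing $u = e^{-Q/2}v$ and applying the Leibniz rule one coordinate at a time gives $D_j(e^{-Q/2}v) = e^{-Q/2}(D_j + i\partial_j Q/2)v$. A short computation shows that the shifted operators $A_j := D_j + i\partial_j Q/2$ commute pairwise, since $[A_j, A_k]$ collapses to $\partial_j(\partial_k Q/2) - \partial_k(\partial_j Q/2) = 0$ by symmetry of the Hessian of $Q$. Hence the substitution extends unambiguously to any polynomial, yielding $P(D)u = e^{-Q/2} P(D + i\nabla Q/2)v$, and the first equality of \eqref{eq21} follows by taking modulus squared and integrating.

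For the second equality, introduce the formal $L^2$-adjoints $A_j^* := D_j - i\partial_j Q/2$. The essential algebraic input is the scalar commutator
$$[A_j, A_k^*] = -\partial_j\partial_k Q = -b_j\,\delta_{jk},$$
which is scalar precisely because the quadratic part of $Q$ is diagonal. This is a canonical commutation relation of creation/annihilation type, so all normal-ordering manipulations close at the scalar level and no higher-order operator corrections will appear in the end.

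Granted this, the heart of the argument is the algebraic identity
$$\bar{P}(A^*)\,P(A) = \sum_\alpha \frac{b^\alpha}{\alpha!}\, P^{(\alpha)}(A)\,\bar{P}^{(\alpha)}(A^*),$$
which I would establish by first verifying the single-slot monomial relation $(A_j^*)^n A_j = A_j (A_j^*)^n + n b_j (A_j^*)^{n-1}$ by induction on $n$, bootstrapping to $\bar{p}(A_j^*)p(A_j)$ for any one-variable polynomial $p$ via Taylor expansion in $A_j^*$, and then iterating coordinate by coordinate (legal because $[A_j,A_k]=0$ and $[A_j,A_k^*]=0$ for $j \neq k$). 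With the identity in hand, integration by parts, which produces no boundary terms thanks to $v \in C_c^\infty$, yields
$$\int |P(A)v|^2\, dx = \int \bar{v}\,\bar{P}(A^*)P(A)v\, dx = \sum_\alpha \frac{b^\alpha}{\alpha!}\int \bar{v}\,P^{(\alpha)}(A)\bar{P}^{(\alpha)}(A^*)v\, dx = \sum_\alpha \frac{b^\alpha}{\alpha!}\int |\bar{P}^{(\alpha)}(A^*)v|^2\, dx,$$
where the last step uses $(\bar{P}^{(\alpha)}(A^*))^* = P^{(\alpha)}(A)$ and the commutativity of the family $\{A_j\}$. The only real obstacle is the combinatorial identity, but it is purely bookkeeping at the level of a Wick-ordering formula; no analytic subtlety intrudes because $Q$ is quadratic, $P$ has constant coefficients, and the sum in $\alpha$ is automatically finite (truncating at $|\alpha| > \deg P$).
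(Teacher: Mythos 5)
Your argument is correct. Note that the paper does not prove this lemma at all: it is quoted as Tr\`eves' identity with a citation to H\"ormander (Lemma 17.2.2 of \cite{H3}), so there is no in-paper proof to compare against; your derivation is essentially the classical one underlying that reference. Concretely, your conjugation step $P(D)u=e^{-Q/2}P(D+i\nabla Q/2)v$ gives the first equality, the commutator $[A_j,A_k^*]=-b_j\delta_{jk}$ is computed correctly (it is scalar because the quadratic part of $Q$ is diagonal), and the normal-ordering identity $\bar P(A^*)P(A)=\sum_\alpha \frac{b^\alpha}{\alpha!}P^{(\alpha)}(A)\bar P^{(\alpha)}(A^*)$ does follow from your single-variable relation $(A_j^*)^nA_j=A_j(A_j^*)^n+nb_j(A_j^*)^{n-1}$ together with the mutual commutativity of operators attached to different coordinates (the coefficient check $\binom{\gamma_j}{\alpha_j}\binom{\gamma'_j}{\alpha_j}\alpha_j!=\frac{1}{\alpha_j!}\frac{\gamma_j!}{(\gamma_j-\alpha_j)!}\frac{\gamma'_j!}{(\gamma'_j-\alpha_j)!}$ reproduces exactly the $\frac{b^\alpha}{\alpha!}$ weights); the final integration by parts on $C_c^\infty$ functions, using $(\bar P^{(\alpha)}(A^*))^*=P^{(\alpha)}(A)$, then yields the stated sum of squares, with the sum finite since $P^{(\alpha)}=0$ for $|\alpha|>\deg P$. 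The only part left as ``bookkeeping'' is the passage from the single-slot relation to the full Wick-type formula, but that step is routine and your reasons for it (scalar commutators, commutativity across slots) are the right ones, so there is no genuine gap.
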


We also need a further conclusion of the Tr\`{e}ves' identity:

\begin{lemma}\label{lm22}
If, in addition to Lemma \ref{lm21}, we have $b_j\geq0$ for $1\leq j\leq d$, then for any non-negative integer $k$, there exists $C=C(\mathrm{deg}\,P,k,d)>0$ such that
	\begin{equation}\label{eq22}
	\sum_{|\alpha|\geq k}b^\alpha\int|\bar{P}^{(\alpha)}(D-i\nabla Q/2)v|^2dx\geq C\sum_{|\alpha|\geq k}b^\alpha\int|P^{(\alpha)}(D+i\nabla Q/2)v|^2dx.
	\end{equation}
\end{lemma}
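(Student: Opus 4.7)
The plan is to iterate Tr\`{e}ves' identity, applying Lemma \ref{lm21} not to $P$ itself but to each of the differentiated polynomials $P^{(\alpha)}$. Since $P^{(\alpha)}$ is again a polynomial with constant coefficients, and since conjugation of coefficients commutes with polynomial differentiation (so that $\overline{P^{(\alpha)}}^{(\beta)} = \bar{P}^{(\alpha+\beta)}$), the right-hand equality of \eqref{eq21} applied to $P^{(\alpha)}$ in place of $P$ yields
\begin{equation*}
\int |P^{(\alpha)}(D + i\nabla Q/2) v|^2 \, dx = \sum_{\beta} \frac{b^\beta}{\beta!} \int |\bar{P}^{(\alpha+\beta)}(D - i\nabla Q/2) v|^2 \, dx
\end{equation*}
for each multi-index $\alpha$. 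This is legitimate for the same $v=e^{Q/2}u$ with $u\in C_c^\infty$ that appears in Lemma \ref{lm21}.

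Next I would multiply both sides by $b^\alpha\geq 0$, sum over $\{|\alpha| \geq k\}$, and reindex the resulting double sum by $\gamma = \alpha + \beta$:
\begin{equation*}
\sum_{|\alpha| \geq k} b^\alpha \int |P^{(\alpha)}(D + i\nabla Q/2) v|^2 \, dx = \sum_{|\gamma| \geq k} M_{k,\gamma}\, b^\gamma \int |\bar{P}^{(\gamma)}(D - i\nabla Q/2) v|^2 \, dx,
\end{equation*}
where $M_{k,\gamma} := \sum_{\alpha \leq \gamma,\, |\alpha| \geq k} (\gamma - \alpha)!^{-1}$ automatically vanishes unless $|\gamma| \geq k$. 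Because $\bar{P}^{(\gamma)} \equiv 0$ whenever $|\gamma| > \deg P$, only finitely many $\gamma$ give nonzero contributions, and $M_{k,\gamma}$ is bounded by a constant $C' = C'(\deg P, k, d)$. Estimating term by term and rearranging then yields \eqref{eq22} with $C = 1/C'$.

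The main obstacle I anticipate is the combinatorial bookkeeping rather than any analytic subtlety: one must verify that the iteration of \eqref{eq21} closes on precisely the expression appearing on the left of \eqref{eq22} (this is where the identity $\overline{P^{(\alpha)}} = \bar{P}^{(\alpha)}$ is crucial) and that the reindexed coefficient $M_{k,\gamma}$ has no hidden dependence beyond $\deg P$, $k$, and $d$. It is also worth noting that the hypothesis $b_j \geq 0$ enters in two places: it guarantees $b^\gamma \geq 0$ so that the reindexed sum can be bounded term by term, and it ensures that multiplication by $b^\alpha$ preserves the direction of the identity during summation.
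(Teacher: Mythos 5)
Your proof is correct, but it runs in the opposite direction from the paper's and is in fact a bit cleaner. The paper expands each left-hand term $\int|\bar{P}^{(\alpha)}(D-i\nabla Q/2)v|^2dx$ by writing it as $\int e^{-Q}|\bar{P}^{(\alpha)}(D)(e^{Q/2}v)|^2dx$ and applying Lemma \ref{lm21} with $Q$ replaced by $-Q$; this produces coefficients $(-b)^\beta/\beta!$ of alternating sign, so the paper must insert factors $\epsilon^{\mathrm{deg}P-|\alpha|}$ and absorb the resulting negative error terms into the main terms by taking $\epsilon$ small. You instead expand each right-hand term $\int|P^{(\alpha)}(D+i\nabla Q/2)v|^2dx$ by the second equality of \eqref{eq21} applied to $P^{(\alpha)}$ (using $\overline{P^{(\alpha)}}^{(\beta)}=\bar{P}^{(\alpha+\beta)}$), where the hypothesis $b_j\geq0$ makes every coefficient $b^\beta/\beta!$ nonnegative; then the reindexing $\gamma=\alpha+\beta$ and the crude bound $M_{k,\gamma}\leq\prod_j(\gamma_j+1)\leq(\mathrm{deg}P+1)^d$ for the only relevant range $|\gamma|\leq\mathrm{deg}P$ (terms with $|\gamma|>\mathrm{deg}P$ vanish since $\bar{P}^{(\gamma)}\equiv0$) immediately give \eqref{eq22} with the explicit constant $C=(\mathrm{deg}P+1)^{-d}$, which does not even depend on $k$. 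What your route buys is the avoidance of any absorption argument and an explicit constant; what the paper's route reflects is the pattern used later in Section \ref{sec3}, where the same $\epsilon$-weighting trick reappears (e.g.\ in \eqref{eq314}), so the two proofs are stylistically consistent there, but as a proof of Lemma \ref{lm22} itself your argument is complete and valid.
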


\begin{proof}
	Notice that when $0<\epsilon<1$ we have
	\begin{equation*}
	\begin{split}
	\sum_{|\alpha|\geq k}b^\alpha\int|\bar{P}^{(\alpha)}(D-i\nabla Q/2)v|^2dx&=\sum_{|\alpha|\geq k}b^\alpha\int e^{-Q}|\bar{P}^{(\alpha)}(D)(e^{\frac Q2}v)|^2dx\\
	\quad\geq&\sum_{|\alpha|\geq k}\epsilon^{\mathrm{deg}\,P-|\alpha|}b^\alpha\int e^{-Q}|\bar{P}^{(\alpha)}(D)(e^{\frac Q2}v)|^2dx,
	\end{split}
	\end{equation*}
	we can then apply Lemma \ref{lm21} to each term in the last line above to obtain
	\begin{equation*}
	\begin{split}
	&\sum_{|\alpha|\geq k}b^\alpha\int|\bar{P}^{(\alpha)}(D-i\nabla Q/2)v|^2dx\\
	\geq&\sum_{|\alpha|\geq k}\sum_{\beta}\frac{\epsilon^{\mathrm{deg}\,P-|\alpha|}b^\alpha(-b)^\beta}{\beta!}\int|P^{(\alpha+\beta)}(D+i\nabla Q/2)v|^2dx\\
	\geq&\sum_{|\alpha|\geq k}\epsilon^{\mathrm{deg}\,P-|\alpha|}b^\alpha\int|P^{(\alpha)}(D+i\nabla Q/2)v|^2dx\\
	&\quad\quad-\sum_{|\alpha|\geq k}\sum_{|\beta|\neq0}\frac{\epsilon^{\mathrm{deg}\,P-|\alpha|}b^{\alpha+\beta}}{\beta!}\int|P^{(\alpha+\beta)}(D+i\nabla Q/2)v|^2dx\\
	\geq&\sum_{|\alpha|\geq k}\epsilon^{\mathrm{deg}\,P-|\alpha|}b^\alpha\int|P^{(\alpha)}(D+i\nabla Q/2)v|^2dx\\
	&\quad\quad-\sum_{|\alpha|\geq k}\sum_{|\beta|\neq0}\epsilon^{\mathrm{deg}\,P-|\alpha+\beta|+1}b^{\alpha+\beta}\int|P^{(\alpha+\beta)}(D+i\nabla Q/2)v|^2dx\\
	=&\sum_{|\alpha|\geq k}\epsilon^{\mathrm{deg}\,P-|\alpha|}(1-\epsilon C_{\mathrm{deg}\,P,\alpha,d})b^\alpha\int|P^{(\alpha)}(D+i\nabla Q/2)v|^2dx.
	\end{split}
	\end{equation*}
	Then choosing $\epsilon$ small completes the proof.
\end{proof}

The following lemma for error estimates is somehow standard, but we still give the proof here to make a self-contained discussion.

\begin{lemma}\label{lm23}
	For fixed $K\in\mathbb{N}_+$ and $\delta_0,\tau_0>0$, there exists $C>0$ such that for all $u(s)\in C^\infty(\mathbb{R})$ we have
	\begin{equation}\label{eq23}
	|(D_s\pm i\tau(1+s))^Ku-D_s^Ku|\leq C\sum_{k<K}\tau^{K-k}|D_s^ku|,\quad|s|<\delta_0,~\tau>\tau_0.
	\end{equation}
	We also have
	\begin{equation}\label{eq24}
	|(D_s\pm i\tau(1+s))^Ku-(D_s\pm i\tau)^Ku|\leq C(1+\delta\tau)\sum_{k<K}\tau^{K-1-k}|D_s^ku|,\quad|s|<\delta\leq\delta_0,~\tau>\tau_0.
	\end{equation}
\end{lemma}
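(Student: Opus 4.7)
The plan is to express $(D_s \pm i\tau(1+s))^K u$ as an explicit polynomial in $D_s$ with scalar coefficients in $(s,\tau)$, and then to read off both (2.3) and (2.4) by inspection of those coefficients. I would treat only the $+$ case; the $-$ case follows by replacing $\tau$ with $-\tau$ throughout. The key gadget is the conjugation identity
\begin{equation*}
D_s + i\tau(1+s) = e^{-\phi} D_s e^{\phi}, \qquad \phi(s) = -\tau\bigl(s + s^2/2\bigr),
\end{equation*}
which iterates to $(D_s + i\tau(1+s))^K = e^{-\phi} D_s^K e^{\phi}$. Applying the Leibniz rule for $D_s$ to the product $e^\phi u$ yields
\begin{equation*}
(D_s + i\tau(1+s))^K u \;=\; \sum_{k=0}^{K} \binom{K}{k}\, r_{K-k}(s,\tau)\, D_s^k u,
\end{equation*}
where $r_j(s,\tau) := e^{-\phi}D_s^j e^{\phi}$ are polynomials satisfying $r_0 = 1$ and the recurrence $r_{j+1} = (D_s + i\tau(1+s))\,r_j$.

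Next I would analyze the structure of $r_j$ by induction on $j$ using this recurrence: every monomial of $r_j$ has the form $c\,(1+s)^a\tau^b$ with $0 \le a \le b \le j$, and the unique top-order-in-$\tau$ monomial is $(i\tau(1+s))^j$. Indeed $D_s$ lowers $a$ by one and leaves $b$ unchanged, while multiplication by $i\tau(1+s)$ raises both $a$ and $b$ by one. Consequently $|r_j(s,\tau)| \le C\tau^j$ uniformly on $|s| < \delta_0$ and $\tau > \tau_0$. Since the $k = K$ term on the right-hand side of the expansion is precisely $D_s^K u$, subtracting it and applying the bound on $r_{K-k}$ for $k < K$ immediately yields (2.3).

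For (2.4), the analogous (commutative) computation gives $(D_s + i\tau)^K u = \sum_{k=0}^{K}\binom{K}{k}(i\tau)^{K-k} D_s^k u$, so the difference reduces to estimating $r_j(s,\tau) - (i\tau)^j$ for each $j \ge 1$. The top-order-in-$\tau$ contribution to that difference is $(i\tau)^j\bigl((1+s)^j - 1\bigr)$, which is bounded by $C\tau^j\delta$ on $|s| < \delta$; all remaining monomials of $r_j$ carry $\tau^b$ with $b \le j-1$ and contribute at most $C\tau^{j-1}$. Hence $|r_j - (i\tau)^j| \le C\tau^{j-1}(1 + \delta\tau)$, which upon substitution into the expansion yields (2.4).

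The only step requiring care is the inductive bookkeeping for the polynomial structure of $r_j$, but this is dictated entirely by the two-operator recurrence above and amounts to routine verification; I do not anticipate any genuine analytical obstacle.
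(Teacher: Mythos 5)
Your proof is correct, and it takes a genuinely different route from the paper. The paper proves \eqref{eq23} and \eqref{eq24} by induction on $K$ at the operator level: it commutes a single factor to write $(D_s+i\tau(1+s))^K=(D_s+i\tau(1+s))^{K-1}D_s+(K-1)\tau(D_s+i\tau(1+s))^{K-2}+i\tau(1+s)(D_s+i\tau(1+s))^{K-1}$ (and the analogous identity with $D_s+i\tau$ in place of $D_s$), then applies the inductive hypothesis to each piece and chains the resulting estimates. You instead use the conjugation $D_s+i\tau(1+s)=e^{-\phi}D_se^{\phi}$ with $\phi=-\tau(s+s^2/2)$, expand $e^{-\phi}D_s^K(e^{\phi}u)$ by the Leibniz rule, and reduce everything to bounding the scalar coefficients $r_j=e^{-\phi}(D_s^j e^{\phi})$; your monomial bookkeeping ($c(1+s)^a\tau^b$ with $b\le j$, unique top term $(i\tau(1+s))^j$) is easily verified from the recurrence $r_{j+1}=(D_s+i\tau(1+s))r_j$, and it delivers exactly $|r_j|\le C\tau^j$ and $|r_j-(i\tau)^j|\le C\tau^{j-1}(1+\delta\tau)$ on $|s|<\delta\le\delta_0$, $\tau>\tau_0$, from which \eqref{eq23} and \eqref{eq24} follow by subtracting the $k=K$ term. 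What your approach buys is a closed-form expansion with fully explicit coefficients and transparent dependence of $C$ on $K$, $\delta_0$, $\tau_0$, at the cost of a separate structural induction on the polynomials $r_j$; the paper's approach never computes coefficients and stays at the level of nested operator estimates, which is slightly shorter but less explicit. Two minor points worth making explicit in a write-up: $r_j$ must be read as the scalar function $e^{-\phi}(D_s^je^{\phi})$, not the conjugated operator (otherwise the expansion is circular), and in the ``$-$'' case the substitution $\tau\mapsto-\tau$ changes only signs of the monomials, so the absolute-value bounds with $\tau>\tau_0>0$ are unaffected.
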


\begin{proof}
	We only prove the "+" case since the "-" case is parallel. The lemma is obvious when $K\leq2$. When $K>2$, for \eqref{eq23},
	\begin{equation*}
	\begin{split}
	(D_s+i\tau(1+s))^K=&D_s(D_s+i\tau(1+s))^{K-1}+i\tau(1+s)(D_s+i\tau(1+s))^{K-1}\\
	=&(D_s+i\tau(1+s))^{K-1}D_s+(K-1)\tau(D_s+i\tau(1+s))^{K-2}\\
	&\quad+i\tau(1+s)(D_s+i\tau(1+s))^{K-1},
	\end{split}
	\end{equation*}
	thus
	\begin{equation*}
	\begin{split}
	&|(D_s\pm i\tau(1+s))^Ku-D_s^Ku|\\
	\leq&|((D_s+i\tau(1+s))^{K-1}-D_s^{K-1})D_su|+(K-1)\tau|(D_s+i\tau(1+s))^{K-2}u|\\
	&\quad+(1+\delta_0)\tau|(D_s+i\tau(1+s))^{K-1}u|\\
	\leq&C\sum_{k<K}\tau^{K-k}|D_s^ku|+C\tau^{-1}\sum_{k<K-1}\tau^{K-k}|D_s^ku|\\
	\leq&C\sum_{k<K}\tau^{K-k}|D_s^ku|.
	\end{split}
	\end{equation*}
	For \eqref{eq24} similarly,
	\begin{equation*}
	\begin{split}
	&(D_s+i\tau(1+s))^K\\
	=&(D_s+i\tau(1+s))^{K-1}(D_s+i\tau)+(K-1)\tau(D_s+i\tau(1+s))^{K-2}\\
	&\quad+i\tau s(D_s+i\tau(1+s))^{K-1},
	\end{split}
	\end{equation*}
	thus
	\begin{equation*}
	\begin{split}
	&|(D_s\pm i\tau(1+s))^Ku-(D_s+i\tau)^Ku|\\
	\leq&|((D_s+i\tau(1+s))^{K-1}-(D_s+i\tau)^{K-1})(D_s+i\tau)u|\\
	&\quad+(K-1)\tau|(D_s+i\tau(1+s))^{K-2}u|+\delta\tau|(D_s+i\tau(1+s))^{K-1}u|\\
	\leq&C(1+\delta+\tau^{-1})(1+\delta\tau)\sum_{k<K}\tau^{K-1-k}|D_s^ku|\\
	\leq&C(1+\delta\tau)\sum_{k<K}\tau^{K-1-k}|D_s^ku|.
	\end{split}
	\end{equation*}
\end{proof}
\section{Carleman estimates}\label{sec3}

In this section, we shall prove two Carleman estimates for the main theorems respectively. The first one is for the higher order parabolic case:

\begin{lemma}\label{lm31}
	Let $\phi=-N\frac{t^2}{2}+x_n+\frac{x_n^2}{2}$ where $N\geq0$. Then there exist $\tau_0=\tau_0(N)>0$, $C=C(\tau_0)>0$, and $0<\delta<1$ which is independent of $N$, such that when $\tau>\tau_0$ we have
	\begin{equation}\label{eq31}
	\begin{split}
	\tau^{-m}\iint e^{2\tau\phi}|D_tu|^2dxdt+\sum_{|\alpha|\leq2m}\tau^{2(\frac{3m}{2}-|\alpha|)}\iint e^{2\tau\phi}|D_x^\alpha u|^2dxdt\\
	\leq C\iint e^{2\tau\phi}|P_p(D_t,D_x)u|^2dxdt,\quad u\in C_c^\infty(U),
	\end{split}
	\end{equation}
	where $U=\{(t,x)\in\mathbb{R}^{1+n};~|t|<\delta,|x_n|<\delta\}$.
\end{lemma}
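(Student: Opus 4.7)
The plan is to apply Trèves' identity (Lemma \ref{lm21}) to $P_p = iD_t + (-\Delta_x)^m$ with the quadratic weight $Q = 2\tau\phi$ and exploit two structural features: (i) the quadratic part of $\phi$ involves only $t$ and $x_n$, so that the coefficient vector in Lemma \ref{lm21} reads $b = (-2N\tau, 0, \ldots, 0, 2\tau)$ and every multi-index with $\alpha_j \neq 0$ for some $1 \leq j \leq n-1$ drops out of the sum; (ii) $P_p$ is linear in $D_t$, so $\bar P_p^{(\alpha)} \equiv 0$ whenever $\alpha_t \geq 2$. Setting $v = e^{\tau\phi}u$ and using the conjugation identity $(D_\sigma + i\tau\partial_\sigma\phi)v = e^{\tau\phi}D_\sigma u$, the estimate \eqref{eq31} is equivalent to
\begin{equation*}
\tau^{-m}\iint|(D_t+i\tau\phi_t)v|^2 + \sum_{|\alpha|\leq 2m}\tau^{3m-2|\alpha|}\iint|(D_x+i\tau\nabla_x\phi)^\alpha v|^2 \leq C\iint|P_p(D+i\tau\nabla\phi)v|^2,
\end{equation*}
while Lemma \ref{lm21} collapses to
\begin{equation*}
\iint|P_p(D+i\tau\nabla\phi)v|^2 + 2N\tau\iint|v|^2 = \sum_{k=0}^{2m}\frac{(2\tau)^k}{k!}\iint|\bar P_p^{(ke_n)}(D-i\tau\nabla\phi)v|^2,
\end{equation*}
the term $-2N\tau\iint|v|^2$ moved to the left coming from the lone $\alpha=e_t$ contribution where $\bar P_p^{(e_t)}=-i$. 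Since all terms on the right are non-negative, each individual $(2\tau)^k/k!\cdot\iint|\bar P_p^{(ke_n)}(\cdot)v|^2$ is bounded above by $\iint|P_p(\cdot)v|^2 + 2N\tau\iint|v|^2$.

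For each $k\geq 1$ the polynomial $\bar P_p^{(ke_n)}(\xi)=\partial_{\xi_n}^k|\xi|^{2m}$ is free of $t$ and $D_t$, so I Fourier transform in the tangential variables $(t,x')$ to reduce the corresponding term to a one-dimensional problem in $x_n$ parametrized by $(\eta,\xi')$. In this 1D problem Lemma \ref{lm23} lets me replace the perturbed factor $(D_{x_n}-i\tau(1+x_n))^K$ by the constant-coefficient $(D_{x_n}-i\tau)^K$ modulo errors of size $C(1+\delta\tau)$ times lower-order derivatives; a further Fourier transform in $x_n$ then converts the main contribution into the polynomial expression $(2\tau)^k/k!\,|\bar P_p^{(ke_n)}(\xi',\xi_n-i\tau)|^2|\hat v|^2$. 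Summing over $k$ and comparing with the tangentially Fourier transformed spatial part of the LHS of \eqref{eq31} reduces the spatial Carleman estimate to a pointwise polynomial bound
\begin{equation*}
\sum_{k=0}^{2m}\frac{(2\tau)^k}{k!}\bigl|\bar P_p^{(ke_n)}(\xi',\xi_n-i\tau)\bigr|^2 \gtrsim \sum_{|\alpha|\leq 2m}\tau^{3m-2|\alpha|}|\xi'|^{2|\alpha'|}(\xi_n^2+\tau^2)^{\alpha_n}
\end{equation*}
in $(\xi',\xi_n,\tau)$, where the dominant $\tau^{3m}$ weight is supplied away from the characteristic set by the $k=0$ contribution, whose leading piece $(-\tau^2(1+x_n)^2)^m$ yields $\sim\tau^{4m}|v|^2$.

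The $\tau^{-m}$-weighted $D_t$ bound is extracted separately from the $k=0$ decomposition $\bar P_p(D-i\tau\nabla\phi)v = -iD_tv + N\tau tv + B_-v$ with $B_-=(D_{x'}^2+(D_{x_n}-i\tau(1+x_n))^2)^m$: the triangle inequality gives $\iint|D_tv|^2 \leq 3\iint|\bar P_p(\cdot)v|^2 + 3\iint|B_-v|^2 + 3N^2\tau^2\delta^2\iint|v|^2$, with $\iint|B_-v|^2$ controlled by the spatial estimate up to a factor of $\tau^m$ (absorbed by the $\tau^{-m}$ in front, after using Lemma \ref{lm23} to compare the $\pm$ shifts) and $\iint|\bar P_pv|^2$ controlled by $\iint|P_pv|^2+2N\tau\iint|v|^2$ as above. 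All error terms, including the $-2N\tau\iint|v|^2$ from the Trèves collapse, the $N^2\tau^{2-m}\delta^2\iint|v|^2$ from the $t$-shift, and the $C(1+\delta\tau)$-type errors from Lemma \ref{lm23}, are absorbed into the dominant $\tau^{3m}\iint|v|^2$ by choosing $\delta$ small and $\tau_0 = \tau_0(N)$ large, after which the conjugation identity returns the estimate to $u$.

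The hard part will be the pointwise polynomial inequality above. Away from the characteristic set $\{\xi_n=0,\ |\xi'|=\tau\}$ it follows readily from either the $k=0$ contribution (which dominates when $|\xi'|+|\xi_n|\ll\tau$) or the $k=2m$ contribution (which dominates when $|\xi_n|\gg\tau$), but on the set itself $|\xi'|^2+(\xi_n-i\tau)^2 = 0$. A substitution shows that the critical-set values satisfy $\bar P_p^{(ke_n)}(\tau,-i\tau)\propto \partial_x^k(1-x^2)^m|_{x=1}$, which vanishes for all $k<m$ and equals $(-i)^m 2^m m!\,\tau^m$ for $k=m$; the $k=m$ term therefore supplies exactly the right $8^m m!\,\tau^{3m}$ contribution, matching the $|\alpha|=0$ target weight. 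Verifying this sharp balance uniformly in $m$ and interpolating the estimate through a neighborhood of the critical set, while tracking the $\tau$-powers for all $|\alpha|\leq 2m$, is where the bulk of the technical work lies.
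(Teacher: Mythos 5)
Your overall architecture is the same as the paper's: conjugate with $Q=2\tau\phi$, apply Tr\`{e}ves' identity and observe that only the $\alpha=e_t$ term (giving $-2N\tau\iint|v|^2$) and the pure $\alpha=ke_n$ terms survive, freeze the coefficient $1+x_n$ to $1$ via Lemma \ref{lm23} with errors of size $C(1+\delta\tau)$ times lower-order terms, pass to the Fourier side, and reduce everything to a lower bound for a sum of weighted symbols; the separate extraction of the $\tau^{-m}|D_t|^2$ piece from the $k=0$ term is also essentially the paper's step \eqref{eq313}. The problem is that the analytic core --- your pointwise polynomial inequality --- is only asserted, and the route you sketch for it does not close. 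The $k=0$ term is $|-i\eta+P_2^m(\xi-i\tau e_n)|^2$; since $\eta$ is a free Fourier variable and your right-hand side does not contain $\eta$, the inequality must hold for the worst-case $\eta$, for which this term degenerates to $|\mathrm{Re}\,P_2^m(\xi-i\tau e_n)|^2$. That quantity vanishes on a codimension-one cone (for $m=1$ it is the whole sphere $|\xi|=\tau$, e.g.\ $\xi_n=\tau/2$, $|\xi'|^2=3\tau^2/4$), which is much larger than the critical set $\{\xi_n=0,\,|\xi'|=\tau\}$, so the claim that ``away from the characteristic set'' the $k=0$ contribution (via its ``leading piece $(-\tau^2(1+x_n)^2)^m$'') supplies the $\tau^{3m}$ weight is false as stated; in particular the dichotomy ``$k=0$ off the critical set, $k=m$ on it'' is not a proof scheme, and ``interpolating through a neighborhood of the critical set uniformly in $m$'' is exactly the part that needs an argument.

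The paper closes precisely this gap with a short structural argument you could adopt: keep only the $k=0$, $k=1$ and $k=m$ terms (plus the constant $k=2m$ term to absorb $-2N\tau\iint|v|^2$), put $|\eta|^2$ on the right-hand side as well, substitute $\eta=|\widetilde{\eta}|^{2m-1}\widetilde{\eta}$ so that both sides become homogeneous of degree $4m$ in $(\tau,\widetilde{\eta},\xi)$, and then check ellipticity of the left-hand side on the unit sphere using the factorizations $\partial_nP_2^m=mP_2^{m-1}\partial_nP_2$, $\partial_n^mP_2^m=m!(\partial_nP_2)^m+P_2\widetilde{P}$, $\partial_nP_2(\xi-i\tau e_n)=2(\xi_n-i\tau)$: a common zero with $\tau\neq0$ would force $P_2(\xi-i\tau e_n)=0$ and then $\partial_n^mP_2^m=m!\,2^m(\xi_n-i\tau)^m\neq0$, a contradiction, while $\tau=0$ is excluded by the first term. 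Compactness then gives \eqref{eq38}--\eqref{eq39} with no case analysis in frequency, and the rest of your outline (absorption of the $(1+\delta\tau)$ errors by choosing $\delta$ small and $\tau$ large, the $N$-dependent terms by $\tau\gg N^{1/(2m-1)}$, and the return from $v$ to $u$ via \eqref{eq23}) goes through as in the paper. Your computation that on the critical set the $k=m$ term yields $8^m m!\,\tau^{3m}$ is correct, but by itself it only verifies the inequality at one stratum; without the homogenization-plus-ellipticity step (or an equally complete substitute) the proposal is incomplete.
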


\begin{proof}
	We first apply Lemma \ref{lm21} to $P_p(\eta,\xi)=i\eta+P_2^m(\xi)$ with $Q=2\tau\phi$ where $P_2(\xi)=\sum_{j=1}^n\xi_j^2$. Denoted by $v=e^{\tau\phi}u$, we obtain for all $u\in C_c^\infty(U)$ that
	\begin{equation*}
	\begin{split}
	&\iint e^{2\tau\phi}|P_p(D_t,D_x)u|^2dxdt\\
	=&\iint|-i(D_t-i\tau\partial_t\phi)v+P_2^m(D_x-i\tau\nabla_x\phi)v|^2dxdt-2N\tau\iint|v|^2dxdt\\
	&+\sum_{k>0}\frac{(2\tau)^k}{k!}\iint|(\partial_n^kP_2^m)(D_x-i\tau\nabla_x\phi)v|^2dxdt\\
	\geq&\iint|-i(D_t-i\tau\partial_t\phi)v+P_2^m(D_x-i\tau\nabla_x\phi)v|^2dxdt\\
	&+2\tau\iint|(\partial_nP_2^m)(D_x-i\tau\nabla_x\phi)v|^2dxdt\\
	&+\frac{(2\tau)^m}{m!}\iint|(\partial_n^mP_2^m)(D_x-i\tau\nabla_x\phi)v|^2dxdt\\
	&+\left(\frac{(2\tau)^{2m}}{(2m)!}(\partial_n^{2m}P_2^m)^2-2N\tau\right)\iint|v|^2dxdt,
	\end{split}
	\end{equation*}
	and here $\partial_n^{2m}P_2^m$ is a constant. When $\tau\geq\frac12\left(\frac{2N(2m)!}{(\partial_n^{2m}P_2^m)^2}\right)^\frac{1}{2m-1}:=\tau_1$, we have
	\begin{equation}\label{eq32}
	\begin{split}
	&C\tau^m\iint e^{2\tau\phi}|P_p(D_t,D_x)u|^2dxdt\\
	\geq&\iint|-i(D_t-i\tau\partial_t\phi)v+P_2^m(D_x-i\tau\nabla_x\phi)v|^2dxdt\\
	&+\tau^2\iint|(\partial_nP_2^m)(D_x-i\tau\nabla_x\phi)v|^2dxdt\\
	&+\tau^{2m}\iint|(\partial_n^mP_2^m)(D_x-i\tau\nabla_x\phi)v|^2dxdt.
	\end{split}
	\end{equation}
	Notice that $\partial_t\phi=-Nt$, $\nabla_x\phi=(0,\cdots,1+x_n)$, $\mathrm{deg}(\partial_nP_2^m)=2m-1$ and $\mathrm{deg}(\partial_n^mP_2^m)=m$, for any $0<\delta<1$ and $\tau>\tau_1$, we can use \eqref{eq24} to treat each term in \eqref{eq32} in the following ways:
	\begin{equation}\label{eq33}
	\begin{split}
	&\iint|-i(D_t-i\tau\partial_t\phi)v+P_2^m(D_x-i\tau\nabla_x\phi)v|^2dxdt\\
	\geq&\frac12\iint|-iD_tv+P_2^m(D_x-i\tau e_n)v|^2dxdt-CN^2\tau^2\iint|v|^2dxdt\\
	&\quad-C(1+\delta\tau)^2\sum_{|\alpha|<2m}\tau^{2(2m-1-|\alpha|)}\iint|D_x^\alpha v|^2dxdt,
	\end{split}
	\end{equation}
	\begin{equation}\label{eq34}
	\begin{split}
	&\iint|(\partial_nP_2^m)(D_x-i\tau\nabla_x\phi)v|^2dxdt\\
	\geq&\frac12\iint|(\partial_nP_2^m)(D_x-i\tau e_n)v|^2dxdt\\
	&\quad-C(1+\delta\tau)^2\sum_{|\alpha|<2m-1}\tau^{2(2m-2-|\alpha|)}\iint|D_x^\alpha v|^2dxdt,
	\end{split}
	\end{equation}
	and
	\begin{equation}\label{eq35}
	\begin{split}
	&\iint|(\partial_n^mP_2^m)(D_x-i\tau\nabla_x\phi)v|^2dxdt\\
	\geq&\frac12\iint|(\partial_n^mP_2^m)(D_x-i\tau e_n)v|^2dxdt\\
	&\quad-C(1+\delta\tau)^2\sum_{|\alpha|<m}\tau^{2(m-1-|\alpha|)}\iint|D_x^\alpha v|^2dxdt.
	\end{split}
	\end{equation}
	Combining \eqref{eq32}-\eqref{eq35} we have
	\begin{equation}\label{eq36}
	\begin{split}
	&C\tau^m\iint e^{2\tau\phi}|P_p(D_t,D_x)u|^2dxdt\\
	\geq&\iint|-iD_tv+P_2^m(D_x-i\tau e_n)v|^2dxdt\\
	&+\tau^2\iint|(\partial_nP_2^m)(D_x-i\tau e_n)v|^2dxdt\\
	&+\tau^{2m}\iint|(\partial_n^mP_2^m)(D_x-i\tau e_n)v|^2dxdt\\
	&-C(1+\delta\tau)^2\sum_{|\alpha|<2m}\tau^{2(2m-1-|\alpha|)}\iint|D_x^\alpha v|^2dxdt-CN^2\tau^2\iint|v|^2dxdt.
	\end{split}
	\end{equation}	
	
	We next claim that
	\begin{equation}\label{eq37}
	\begin{split}
	&\iint|-iD_tv+P_2^m(D_x-i\tau e_n)v|^2dxdt+\tau^2\iint|(\partial_nP_2^m)(D_x-i\tau e_n)v|^2dxdt\\
	&\quad+\tau^{2m}\iint|(\partial_n^mP_2^m)(D_x-i\tau e_n)v|^2dxdt\\
	&\geq C\left(\iint|D_tv|^2dxdt+\sum_{|\alpha|\leq2m}\tau^{2(2m-|\alpha|)}\iint|D_x^\alpha v|^2dxdt\right).
	\end{split}
	\end{equation}
	By Parseval's formula, it suffices to show 
	\begin{equation}\label{eq38}
	\begin{split}
	&|-i\eta+P_2^m(\xi-i\tau e_n)|^2+\tau^2|(\partial_nP_2^m)(\xi-i\tau e_n)|^2+\tau^{2m}|(\partial_n^mP_2^m)(\xi-i\tau e_n)|^2\\
	&\geq C(|\eta|^2+|\xi-i\tau e_n|^{4m})
	\end{split}
	\end{equation}
	for all $(\tau,\eta,\xi)\in\mathbb{R}^{1+1+n}$. With the bijection $\eta=|\widetilde{\eta}|^{2m-1}\widetilde{\eta}$ in $\mathbb{R}$, it is equivalent to show for all $(\tau,\widetilde{\eta},\xi)\in\mathbb{R}^{1+1+n}$ that
	\begin{equation}\label{eq39}
	\begin{split}
	&|-i|\widetilde{\eta}|^{2m-1}\widetilde{\eta}+P_2^m(\xi-i\tau e_n)|^2+\tau^2|(\partial_nP_2^m)(\xi-i\tau e_n)|^2+\tau^{2m}|(\partial_n^mP_2^m)(\xi-i\tau e_n)|^2\\
	\geq&C(|\widetilde{\eta}|^{4m}+|\xi-i\tau e_n|^{4m}).
	\end{split}
	\end{equation}
	Since both sides of \eqref{eq39} are homogeneous in $(\tau,\widetilde{\eta},\xi)\in\mathbb{R}^{1+1+n}$ of degree $4m$, and the right hand side is elliptic, we must show that the left hand side is also elliptic, which is a straight consequence of the following facts:
	\begin{equation}\label{eq310}
	\begin{cases}
	(\partial_nP_2^m)(\xi-i\tau e_n)=mP_2^{m-1}(\xi-i\tau e_n)(\partial_nP_2)(\xi-i\tau e_n),\\
	(\partial_n^mP_2^m)(\xi-i\tau e_n)=m!((\partial_nP_2)(\xi-i\tau e_n))^m+P_2(\xi-i\tau e_n)\widetilde{P}(\xi-i\tau e_n),\\
	(\partial_nP_2)(\xi-i\tau e_n)=2(\xi_n-i\tau),
	\end{cases}
	\end{equation}
	where $\widetilde{P}$ is some polynomial.
	
	Now Compare the last term in \eqref{eq36} and the last term when $\alpha=0$ in \eqref{eq37}, if $\tau\gg N^\frac{1}{2m-1}:=\tau_2$, we can combine \eqref{eq36} and \eqref{eq37} to have for some constants $C_1$ and $C_2$ that
	\begin{equation}\label{eq311}
	\begin{split}
	&\tau^m\iint e^{2\tau\phi}|P_p(D_t,D_x)u|^2dxdt\\
	\geq& C_1\left(\iint|D_tv|^2dxdt+\sum_{|\alpha|\leq2m}\tau^{2(2m-|\alpha|)}\iint|D_x^\alpha v|^2dxdt\right)\\
	&-C_2(1+\delta\tau)^2\sum_{|\alpha|<2m}\tau^{2(2m-1-|\alpha|)}\iint|D_x^\alpha v|^2dxdt.
	\end{split}
	\end{equation}
	If we choose $\delta\leq\sqrt{\frac{C_1}{8C_2}}$ and $\tau\geq\sqrt{\frac{8C_2}{C_1}}:=\tau_3$, then $C_2(1+\delta\tau)^2\leq\frac{C_1}{2}\tau^2$, and therefore
	\begin{equation}\label{eq312}
	\begin{split}
	&C\tau^m\iint e^{2\tau\phi}|P_p(D_t,D_x)u|^2dxdt\\
	\geq& \iint|D_tv|^2dxdt+\sum_{|\alpha|\leq2m}\tau^{2(2m-|\alpha|)}\iint|D_x^\alpha v|^2dxdt.
	\end{split}
	\end{equation}
	Notice that $D_tv=e^{\tau\phi}(D_t+i\tau Nt)u$, we have
	\begin{equation}\label{eq313}
	\iint|D_tv|^2dxdt\geq\frac12\iint e^{2\tau\phi}|D_tu|^2dxdt-CN^2\tau^2\iint e^{2\tau\phi}|u|^2dxdt.
	\end{equation}
	Also notice that $D_x^\alpha v=e^{\tau\phi}(D_x-i\tau(1+x_n)e_n)^\alpha u$, then for $0<\epsilon<1$ small enough, we have by \eqref{eq23} that 
	\begin{equation}\label{eq314}
	\begin{split}
	&\sum_{|\alpha|\leq2m}\tau^{-2|\alpha|}\iint|D_x^\alpha v|^2dxdt\\
	\geq&\sum_{|\alpha|\leq2m}\epsilon^{|\alpha|}\tau^{-2|\alpha|}\iint e^{2\tau\phi}|(D_x-i\tau(1+x_n)e_n)^\alpha u|^2dxdt\\
	\geq&\frac12\sum_{|\alpha|\leq2m}\epsilon^{|\alpha|}\tau^{-2|\alpha|}\iint e^{2\tau\phi}|D_x^\alpha u|^2dxdt\\
	&\quad\quad\quad-C\sum_{0<|\alpha|\leq2m}\sum_{\beta<\alpha}\epsilon^{|\alpha|}\tau^{-2|\beta|}\iint e^{2\tau\phi}|D_x^\beta u|^2dxdt\\
	\geq&\frac12\sum_{|\alpha|\leq2m}\epsilon^{|\alpha|}\tau^{-2|\alpha|}\iint e^{2\tau\phi}|D_x^\alpha u|^2dxdt\\
	&\quad\quad\quad-C'\sum_{|\alpha|<2m}\epsilon^{|\alpha|+1}\tau^{-2|\alpha|}\iint e^{2\tau\phi}|D_x^\alpha u|^2dxdt\\
	\geq&C\sum_{|\alpha|\leq2m}\tau^{-2|\alpha|}\iint e^{2\tau\phi}|D_x^\alpha u|^2dxdt.
	\end{split}
	\end{equation}
	Finally, since $\tau\gg\tau_2$ again, \eqref{eq312}-\eqref{eq314} prove \eqref{eq31} with $\tau_0\gg\mathrm{max}\{\tau_1,\tau_2,\tau_3\}$.
\end{proof}

\begin{remark}\label{rk31}
	Since $\phi$ is independent of $x'=(x_1,\cdots,x_{n-1})$, Lemma \ref{lm31} actually holds for $u(t,x',x_n)$ with $\mathrm{supp}\,u\subset U(\delta')=\{(t,x)\in\mathbb{R}^{1+n};~|t|<\delta',|x_n|<\delta'\}$ and $\partial_tu,\partial_x^\alpha u\in L^2(U(\delta'))$, $|\alpha|\leq2m$, for some $\delta'<\delta$. This also applies to the next Lemma \ref{lm32}.
\end{remark}

We next consider the higher order Schr\"{o}dinger case, where the Carleman estimate obtained will be slightly weaker than \eqref{eq31} formally, due to some lack of anisotropic ellipticity respecting \eqref{eq38} and \eqref{eq39}.

\begin{lemma}\label{lm32}
	Let $\phi=-N\frac{t^2}{2}+x_n+\frac{x_n^2}{2}$ where $N\geq0$. Then there exist $\tau_0=\tau_0(N)>0$, $C=C(\tau_0)>0$, and $0<\delta<\frac12$ which is independent of $N$, such that when $\tau>\tau_0$ we have
	\begin{equation}\label{eq315}
	\begin{split}
	\sum_{|\alpha|\leq2m-2}\tau^{2(\frac{3m}{2}-|\alpha|)}\iint e^{2\tau\phi}|D_x^\alpha u|^2dxdt+\sum_{|\alpha|=2m-2}\tau^{2(1-\frac m2)}\iint e^{2\tau\phi}|D_x^{\alpha+e_n}u|^2dxdt\\
	\leq C\iint e^{2\tau\phi}|P_s(D_t,D_x)u|^2dxdt,\quad u\in C_c^\infty(U),
	\end{split}
	\end{equation}
	where $U=\{(t,x)\in\mathbb{R}^{1+n};~|t|<\delta,|x_n|<\delta\}$.
\end{lemma}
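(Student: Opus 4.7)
The plan is to parallel the proof of Lemma \ref{lm31}, adjusting for the different symbol. I apply Lemma \ref{lm21} to $P_s(\eta, \xi) = \eta + P_2^m(\xi)$ with $Q = 2\tau\phi$, exploiting that $P_s$ is linear in $\eta$: $\partial_\eta P_s = 1$ and $\partial_\eta^k P_s = 0$ for $k \geq 2$. Consequently every mixed derivative $\partial_\eta^{\alpha_t}\partial_{\xi_n}^{\alpha_n}P_s$ with $\alpha_t \geq 1$ and $\alpha_n \geq 1$ vanishes, and the Tr\`eves expansion reduces to the principal term $\iint|P_s(D - i\tau\nabla\phi)v|^2dxdt$, purely spatial contributions $\sum_{k\geq 1}\frac{(2\tau)^k}{k!}\iint|(\partial_n^k P_2^m)(D_x - i\tau\nabla_x\phi)v|^2dxdt$, and a single sign-indefinite term $-2N\tau\iint|v|^2dxdt$ coming from $b_t = -2N\tau$. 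For $\tau$ large relative to $N$, the $k = 2m$ positive contribution (whose coefficient is a positive constant times $\tau^{2m}$, since $\partial_n^{2m}P_2^m$ is a positive constant) absorbs $-2N\tau\|v\|^2$, and I keep only $k \in \{1, m, 2m\}$ as in \eqref{eq32}. Lemma \ref{lm23} then lets me replace $\tau(1 + x_n)$ by $\tau$ modulo errors of the form $(1 + \delta\tau)^2\sum_{|\alpha| < \mathrm{deg}}\tau^{2(\mathrm{deg}-1-|\alpha|)}\iint|D_x^\alpha v|^2dxdt$, entirely analogous to \eqref{eq33}--\eqref{eq35}.

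The heart of the proof is the resulting symbol-level inequality: for all $(\tau, \eta, \xi) \in \mathbb{R}^{1+1+n}$ one must show that
\[
 |\eta + P_2^m(\xi - i\tau e_n)|^2 + \tau^2|(\partial_n P_2^m)(\xi - i\tau e_n)|^2 + \tau^{2m}|(\partial_n^m P_2^m)(\xi - i\tau e_n)|^2
\]
is bounded below by $C$ times the Fourier analogue of the target, namely $\sum_{|\alpha|\leq 2m-2}\tau^{3m-2|\alpha|}|\xi|^{2|\alpha|} + \sum_{|\alpha|=2m-2}\tau^{2-m}|\xi|^{4m-4}|\xi_n|^2$. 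Because $P_s$ is linear in $\eta$, the real and imaginary parts of $\eta + P_2^m(\xi - i\tau e_n)$ decouple: choosing $\eta = -\mathrm{Re}\,P_2^m(\xi - i\tau e_n)$ annihilates the real part, so no lower bound on $|\eta|^2$ is available (this accounts for the absence of a $D_t u$ term in \eqref{eq315}). Setting $z = |\xi'|^2 + (\xi_n - i\tau)^2$ and using the identities \eqref{eq310}, the effective lower bound becomes
\[
 (\mathrm{Im}\,z^m)^2 + 4m^2\tau^2(\xi_n^2 + \tau^2)|z|^{2(m-1)} + \tau^{2m}\bigl|m!\bigl(2(\xi_n - i\tau)\bigr)^m + z\,\widetilde P(\xi - i\tau e_n)\bigr|^2.
\]
The claim is that these three summands cannot simultaneously collapse: at the critical locus $z = 0$, which for $\tau \neq 0$ forces $\xi_n = 0$ and $|\xi'|^2 = \tau^2$, the third summand is of order $\tau^{4m}$ whereas the target is of order $\tau^{3m}$; away from this locus either the second summand (powered by $\tau^2(\xi_n^2 + \tau^2) \geq \tau^4$) or the imaginary-part term (whose angular content is nontrivial as soon as $\xi_n\tau \neq 0$) dominates. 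A careful case analysis according to the size of $z$ relative to $\tau^2$ and $|\xi'|^2$ then yields the desired uniform bound.

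The final step mirrors the end of Lemma \ref{lm31}: translate back from $v$ to $u$ via $D_x^\alpha v = e^{\tau\phi}(D_x - i\tau(1 + x_n)e_n)^\alpha u$ and Lemma \ref{lm23}, then absorb the quadratic-in-$\delta\tau$ errors from the first step by shrinking $\delta$ and enlarging $\tau_0$, following \eqref{eq311}--\eqref{eq314}. The main obstacle, and the reason this estimate is weaker than \eqref{eq31}, is the symbol inequality itself: the homogenizing substitution $\eta = |\widetilde\eta|^{2m-1}\widetilde\eta$ that rendered \eqref{eq39} cleanly elliptic in the parabolic case is no longer available, since here $\eta$ is entirely decoupled from the positive contributions. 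Only the ``imaginary'' spatial content of the symbol remains, which is precisely what dictates the loss of top-order derivatives and the weakened weight $\tau^{2-m}$ on the $(2m-1)$-st derivatives containing an $x_n$ factor.
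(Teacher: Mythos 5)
Your route is genuinely different from the paper's: the paper does \emph{not} run the parabolic symbol argument on $P_s$. It discards the term containing $D_t$ altogether, uses Lemma \ref{lm22} to flip $(D_x-i\tau\nabla_x\phi)$ into $(D_x+i\tau\nabla_x\phi)$, factors $(\partial_nP_2^m)(D_x+i\tau\nabla_x\phi)=2m\,P_2^{m-1}(D_x+i\tau\nabla_x\phi)(D_n+i\tau\partial_n\phi)$, gains a factor $\tau^2$ by integrating by parts with $(D_n-i\tau\partial_n\phi)(D_n+i\tau\partial_n\phi)=D_n^2+\tau+\tau^2(1+x_n)^2\geq\tau^2/4$, and then reruns the parabolic-type argument on the \emph{elliptic} operator $P_2^{m-1}$ of degree $2m-2$, where the Lemma \ref{lm23} errors (order $\leq 2m-3$) are absorbed without difficulty. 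Your plan, by contrast, keeps the degenerate Schr\"odinger symbol and hits two quantitative problems that the paper's detour is designed to avoid.

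First, a bookkeeping gap of size $\tau^m$: the coefficients $\tau^2$ and $\tau^{2m}$ on the $k=1$ and $k=m$ terms are only available after multiplying $\iint e^{2\tau\phi}|P_su|^2$ by $C\tau^m$, exactly as in \eqref{eq32} (the Tr\`eves coefficients are $2\tau$ and $(2\tau)^m/m!$). Since your symbol inequality is stated with the \emph{final} weights $\tau^{3m-2|\alpha|}$ and $\tau^{2-m}$, chaining it through this reduction yields \eqref{eq315} with an extra factor $\tau^m$ on the right-hand side, i.e. weights $\tau^{2(m-|\alpha|)}$ --- strictly weaker than the lemma. To conclude \eqref{eq315} you would need the stronger bound with weights $\tau^{2(2m-|\alpha|)}$ for $|\alpha|\leq 2m-2$ and $\tau^2$ on the normal $(2m-1)$-st derivatives (this stronger bound is in fact true: split into $|z|\gtrsim|\xi|^2+\tau^2$, where the $k=1$ term dominates, and $|\xi|\sim\tau$, $|z|\ll\tau^2$, where $(\partial_n^mP_2^m)(\xi-i\tau e_n)=m!(2(\xi_n-i\tau))^m+z\widetilde P$ is of size $\tau^m$), but you neither state nor prove it, and your ``careful case analysis'' is only asserted. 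Second, the absorption step fails as described: the errors ``entirely analogous to \eqref{eq33}--\eqref{eq35}'' contain \emph{all} derivatives of order $2m-1$ with weight $(1+\delta\tau)^2\sim\delta^2\tau^2$, in particular purely tangential ones, while your main term gives no control of $\tau^2|\xi'|^{2(2m-1)}$ in the region $\xi_n=0$, $|\xi'|\gg\tau$ (there $\mathrm{Im}\,z^m=0$ and the remaining terms are $O(\tau^4|\xi'|^{4m-4}+\tau^{2m+2}|\xi'|^{2m-2}+\tau^{2m}|\xi'|^{2m})$), so no choice of small $\delta$ and large $\tau$ absorbs them; enlarging $\tau$ makes the mismatch worse. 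A repair is possible --- since $P_2^m$ contains only even powers of $\xi_n$, every top-order error produced by Lemma \ref{lm23} carries at least one $D_n$ and is then dominated by the $\tau^2|\xi|^{4m-4}\xi_n^2$ part of the strengthened bound --- but this finer accounting is an essential missing ingredient, not a routine repetition of \eqref{eq311}--\eqref{eq314}.
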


\begin{proof}
	As in the proof of Lemma \ref{lm31}, we first apply Lemma \ref{lm21} to $P_s(\eta,\xi)=\eta+P_2^m(\xi)$ with $Q=2\tau\phi$ where $P_2(\xi)=\sum_{j=1}^n\xi_j^2$, and obtain for all $u\in C_c^\infty(U)$ that
	\begin{equation*}
	\begin{split}
	&\iint e^{2\tau\phi}|P_s(D_t,D_x)u|^2dxdt\\
	=&\iint|(D_t-i\tau\partial_t\phi)v+P_2^m(D_x-i\tau\nabla_x\phi)v|^2dxdt-2N\tau\iint|v|^2dxdt\\
	&+\sum_{k>0}\frac{(2\tau)^k}{k!}\iint|(\partial_n^kP_2^m)(D_x-i\tau\nabla_x\phi)v|^2dxdt,
	\end{split}
	\end{equation*}
	where $v=e^{\tau\phi}u$. Discarding the first term on the right hand side and apply \eqref{eq22} to the last sum, we have
	\begin{equation}\label{eq316}
	\begin{split}
	&\iint e^{2\tau\phi}|P_s(D_t,D_x)u|^2dxdt\\
	\geq&C\tau\iint|(\partial_nP_2^m)(D_x+i\tau\nabla_x\phi)v|^2dxdt+\left(C'\tau^{2m}-2N\tau\right)\iint|v|^2dxdt.
	\end{split}
	\end{equation}
	When $\tau\geq(\frac{2N}{C'}):=\tau_1$, the last term on the right hand side of \eqref{eq316} is discarded. For the remaining term, observe that
	\begin{equation*}
	\begin{split}
	(\partial_nP_2^m)(D_x+i\tau\nabla_x\phi)&=2m(D_n+i\tau\partial_n\phi)P_2^{m-1}(D_x+i\tau\nabla_x\phi)\\
	&=2mP_2^{m-1}(D_x+i\tau\nabla_x\phi)(D_n+i\tau\partial_n\phi),
	\end{split}
	\end{equation*}
	denoted by $v_n=(D_n+i\tau\partial_n\phi)v$, we then have
	\begin{equation}\label{eq317}
	\begin{split}
	&C\iint e^{2\tau\phi}|P_s(D_t,D_x)u|^2dxdt\\
	\geq&\tau\iint|P_2^{m-1}(D_x+i\tau\nabla_x\phi)v_n|^2dxdt\\
	&\quad+\tau\iint|(D_n+i\tau\partial_n\phi)P_2^{m-1}(D_x+i\tau\nabla_x\phi)v|^2dxdt.
	\end{split}
	\end{equation}
	Also notice that the operator
	\begin{equation*}
	\begin{split}
	(D_n-i\tau\partial_n\phi)(D_n+i\tau\partial_n\phi)&=D_n^2+\tau\partial_n^2\phi+\tau^2(\partial_n\phi)^2\\
	&=D_n^2+\tau+\tau^2(1+x_n)^2,
	\end{split}
	\end{equation*}
	we integrate by parts and use the fact that $(1+x_n)^2\geq(1-\delta)^2\geq\frac14$ in $U$, to obtain
	\begin{equation}\label{eq318}
	\begin{split}
	&C\iint e^{2\tau\phi}|P_s(D_t,D_x)u|^2dxdt\\
	\geq&\tau\iint|P_2^{m-1}(D_x+i\tau\nabla_x\phi)v_n|^2dxdt+\tau^3\iint|P_2^{m-1}(D_x+i\tau\nabla_x\phi)v|^2dxdt.
	\end{split}
	\end{equation}
	
	We only outline how to estimate the second term on the right hand side of \eqref{eq318}, since it is similar to the proof of Lemma \ref{lm31} and it also applies to the first term with no change. We first use Lemma \ref{lm21} to show that
	\begin{equation*}
	\begin{split}
	&C\tau^{m-1}\iint|P_2^{m-1}(D_x+i\tau\nabla_x\phi)v|^2dxdt\\
	\geq&\iint|P_2^{m-1}(D_x-i\tau\nabla_x\phi)v|^2dxdt+\tau^2\iint|(\partial_nP_2^{m-1})(D_x-i\tau\nabla_x\phi)v|^2dxdt\\
	&\quad+\tau^{2m-2}\iint|(\partial_n^{m-1}P_2^{m-1})(D_x-i\tau\nabla_x\phi)v|^2dxdt.
	\end{split}
	\end{equation*}
	An argument completely parallel to \eqref{eq33}-\eqref{eq312} shows that for small $\delta$ and large $\tau$ we have
	\begin{equation*}
	\iint|P_2^{m-1}(D_x+i\tau\nabla_x\phi)v|^2dxdt\geq C\sum_{|\alpha|\leq2m-2}\tau^{2(\frac{3m}{2}-\frac32-|\alpha|)}\iint|D_x^\alpha v|^2dxdt.
	\end{equation*}
	Plug this back in \eqref{eq318}, we get
	\begin{equation*}
	\begin{split}
	&C\iint e^{2\tau\phi}|P_s(D_t,D_x)u|^2dxdt\\
	\geq&\sum_{|\alpha|\leq2m-2}\tau^{2(\frac{3m}{2}-1-|\alpha|)}\iint|D_x^\alpha v_n|^2dxdt+\sum_{|\alpha|\leq2m-2}\tau^{2(\frac{3m}{2}-|\alpha|)}\iint|D_x^\alpha v|^2dxdt.
	\end{split}
	\end{equation*}
	Finally, because $D_x^\alpha v_n=e^{\tau\phi}(D_x-i\tau(1+x_n)e_n)^\alpha D_nu$ and $D_x^\alpha v=e^{\tau\phi}(D_x-i\tau(1+x_n)e_n)^\alpha u$, an argument which is the same to \eqref{eq314} completes the proof.
\end{proof}


\section{Proofs of Theorem \ref{thm11} and Theorem \ref{thm12}}\label{sec4}

The two proofs are almost the same.
\begin{proof}
	We first prove Theorem \ref{thm11}. By translation, it is equivalent to prove that $u(t,x',x_n)\equiv0$ in some neighborhood of $\{0\}\times\mathbb{R}^{n-1}\times(-B',0)$, and $B'>0$ can be chosen independent of $A$. Let $\epsilon\in(0,1)$ be chosen later but fixed, and set $u_\epsilon(t,x)=u(\epsilon^{2m}t,\epsilon x)$. Then by \eqref{eq11},
	\begin{equation}\label{eq41}
	|P_p(D_t,D_x)u_\epsilon|\leq C\sum_{|\alpha|\leq[\frac{3m}{2}]}\epsilon^{2m-|\alpha|}|\partial_x^\alpha u_\epsilon|
	\end{equation}
	holds in $(-\epsilon^{-2m}A,\epsilon^{-2m}A)\times\mathbb{R}^{n-1}\times(-\epsilon^{-1}B,\epsilon^{-1}B)$. We first choose $\delta\in(0,B)$ such that Lemma \ref{lm31} is valid, and then choose $\delta_0(\epsilon,A,\delta)<\mathrm{min}\{\delta,\epsilon^{-2m}A\}$. Next we take $\theta\in C_c^\infty(-\delta_0,\delta_0)$ and $\chi\in C_c^\infty(-\delta,\delta)$, such that $\theta\equiv1$ in $(-\frac{\delta_0}{2},\frac{\delta_0}{2})$ and $\chi\equiv1$ in $(-\frac{\delta}{2},\frac{\delta}{2})$. Denoted by $U_\epsilon(t,x)=\theta(t)\chi(x_n)u_\epsilon(t,x)$, and notice that
	\begin{equation}\label{eq42}
	P_p(D_t,D_x)U_\epsilon=\theta\chi P_p(D_t,D_x)u_\epsilon+\theta'\chi u_\epsilon+\sum_{|\alpha|<2m}C_{\alpha}\theta\chi^{(2m-|\alpha|)}\partial_x^\alpha u_\epsilon,
	\end{equation}
	in the view of Fourier transform and by the regularity assumption on $u$, it follows that $P_p(D_t,D_x)U_\epsilon\in L^2(X)$ and thus $\partial_tU_\epsilon,\partial_x^\alpha U_\epsilon\in L^2(X)$ for $|\alpha|\leq2m$. Therefore by Remark \ref{rk31}, we can apply Lemma \ref{lm31} to $U_\epsilon$ with $N=4\delta_0^{-2}\delta(1-\frac\delta4)$ and obtain when $\tau>\tau_0$ that
	\begin{equation}\label{eq43}
	\begin{split}
	&C\sum_{|\alpha|\leq[\frac{3m}{2}]}\tau^{2(\frac{3m}{2}-|\alpha|)}\iint e^{2\tau\phi}|D_x^\alpha U_\epsilon|^2dxdt\\
	\leq&\sum_{|\alpha|\leq[\frac{3m}{2}]}\epsilon^{2(2m-|\alpha|)}\iint_{|t|<\frac{\delta_0}{2},|x_n|<\frac\delta2}e^{2\tau\phi}|D_x^\alpha U_\epsilon|^2dxdt\\
	&\quad+\left(\iint_{\frac{\delta_0}{2}<|t|<\delta_0,|x_n|<\delta}+\iint_{|t|<\delta_0,\frac{\delta}{2}<|x_n|<\delta}\right)e^{2\tau\phi}|P_p(D_t,D_x)U_\epsilon|^2dxdt,
	\end{split}
	\end{equation}
	while the first line on the right hand side comes from \eqref{eq41} and the fact that $U_\epsilon\equiv u_\epsilon$ when $|t|<\frac{\delta_0}{2}$ and $|x_n|<\frac\delta2$. When $\epsilon$ is chosen small and $\tau$ is large, the first line on the right hand side of \eqref{eq43} is absorbed into the left hand side. One checks that the second two terms are bounded by $C'e^{-\tau\delta(1-\frac\delta4)}$ due to the regions of integrations, thus we have
	\begin{equation}
	\tau^{3m}\iint e^{2\tau\phi}|U_\epsilon|^2dxdt\leq Ce^{-\tau\delta(1-\frac\delta4)}.
	\end{equation}
	Notice that for every $x_n\in(-\frac\delta2,0)$ we have $x_n+\frac{x_n^2}{2}>-\frac\delta2(1-\frac\delta4)$, then there exists $\widetilde{\delta_0}(x_n)\in(0,\frac{\delta_0}{2})$ such that if $-\frac\delta2<x_n<0$ and $|t|<\widetilde{\delta_0}(x_n)$, we have $\theta(t)\equiv\chi(x_n)\equiv1$,
	\begin{equation}
	\phi\geq-\frac N2\widetilde{\delta_0}^2+x_n+\frac{x_n^2}{2}\geq-\frac\delta2(1-\frac\delta4),
	\end{equation}
	and therefore
	\begin{equation}
	\tau^{3m}\iint_{|t|<\widetilde{\delta_0}(x_n),-\frac\delta2<x_n<0}|u_\epsilon|^2dxdt\leq C.
	\end{equation}
	Let $\tau\rightarrow+\infty$, we prove that $u_\epsilon\equiv0$ in a neighborhood of $\{0\}\times\mathbb{R}^{n-1}\times(-\frac\delta2,0)$, i.e. $u\equiv0$ in a neighborhood of $\{0\}\times\mathbb{R}^{n-1}\times(-\frac{\epsilon\delta}{2},0)$. Finally, recall that the choices of $\delta$ and $\epsilon$ only depend on Lemma \ref{lm31}, $B$ and the differential inequality \eqref{eq11}, thus we can take $B'=\frac{\epsilon\delta}{2}$ to complete the proof of Theorem \ref{thm11}.
	
	For the prove of Theorem \ref{thm12}, we use \eqref{eq12} and Lemma \ref{lm32} instead of \eqref{eq11} and Lemma \ref{lm31}. Then all the above details are correspondingly replaced for $P_s(D_t,D_x)$, except that \eqref{eq42} does not imply $\partial_tU_\epsilon,\partial_x^\alpha U_\epsilon\in L^2(X)$ for $|\alpha|\leq2m$, which we have assumed $\partial_tu,\partial_x^\alpha u\in L^2(X)$ for $|\alpha|\leq2m$ instead to ensure.
\end{proof}

\section{Local and weak unique continuation results}\label{sec5}

In the proofs of Theorem \ref{thm11} and Theorem \ref{thm12}, the zero level set of $\phi$ intersects the hyperplane on a line, but thanks to Remark \ref{rk31}, we don't have to cut off the solution in the $x'$ variables, and the essential intersection is a single point within the $(t,x_n)$-plane, which leads Carleman estimates to work. It is not surprising that a more standard use of Lemma \ref{lm31} and Lemma \ref{lm32} can also yield local results considering solutions which have initial vanishing strictly larger than the half space locally. Actually, vanishing on a saddle shape set is enough, and we will slightly modify Lemma \ref{lm31} and Lemma \ref{lm32} to prove the following local unique continuation property:

\begin{proposition}\label{prop51}
	Let $X$ be a neighborhood of $0\in\mathbb{R}^{1+n}$. Suppose $\partial_x^\alpha u\in L^2(X)$ for $|\alpha|<2m$, $P_p(D_t,D_x)u\in L^2(X)$ and \eqref{eq11} holds in $X$; or suppose $\partial_tu,\partial_x^\alpha u\in L^2(X)$ for $|\alpha|\leq2m$, and \eqref{eq12} holds in $X$. If $\mathrm{supp}\,u$ is contained in a saddle shape set
	\begin{equation}
	F=\left\{(t,x)\in X;~-\varphi(t)+x_n+\mbox{$N\frac{x_n^2}{2}$}+f(x')\leq0\right\}
	\end{equation}
	for some $C^2\ni\varphi\geq0$, $N>0$ and $f\geq0$, with the property that $\varphi(t)+f(x')=0\Rightarrow(t,x')=0$, then $u\equiv0$ in a neighborhood of $0$.
\end{proposition}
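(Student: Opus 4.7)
The plan is to apply the Carleman estimate Lemma \ref{lm31} (parabolic case) or Lemma \ref{lm32} (Schr\"odinger case) with the same quadratic weight $\phi=-\widetilde{N}\,t^2/2+x_n+x_n^2/2$ used there, where $\widetilde{N}$ is chosen large and is notationally distinguished from the constant $N$ appearing in $F$. The geometric heart of the argument is the claim that for $\widetilde{N}$ sufficiently large and $r>0$ sufficiently small, $\phi\le 0$ on $F\cap B_r$, with equality only at the origin. To verify it, for each $(t,x')$ I would solve the quadratic constraint $x_n+Nx_n^2/2\le c$, with $c:=\varphi(t)-f(x')$, for its upper root
\begin{equation*}
a_+(c)=\frac{-1+\sqrt{1+2Nc}}{N}=c+O(c^2).
\end{equation*}
Since $g(x_n):=x_n+x_n^2/2$ is monotone increasing on $(-1,\infty)$, on $F\cap B_r$ (choosing $r<\min(1,2/N)$) one gets $g(x_n)\le g(a_+(c))=c+\tfrac{1-N}{2}a_+(c)^2=c+O(c^2)$, hence
\begin{equation*}
\phi\le -\widetilde{N}\,t^2/2+\varphi(t)-f(x')+O((\varphi(t)-f(x'))^2).
\end{equation*}
The saddle hypothesis forces $\varphi(0)=f(0)=0$; combined with $\varphi\in C^2$ and $\varphi\ge 0$ this gives $\varphi'(0)=0$ and $\varphi(t)\le Ct^2$ near $t=0$, while continuity of $f\ge 0$ with $f(0)=0$ yields $f^2\le(\sup_{B_r}f)\cdot f=o(1)\cdot f$ as $r\to 0$. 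Enlarging $\widetilde{N}$ and shrinking $r$, the remainder is absorbed and the estimate becomes $\phi\le-c_0(t^2+f(x'))\le 0$, vanishing only at $(t,x')=0$, which then forces $x_n=0$ via $c=0$ and $a_+(0)=0$.

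Granted this, the main argument mirrors the proofs of Theorem \ref{thm11} and Theorem \ref{thm12}. Cut off $u$ in the $(t,x_n)$-variables only, setting $U=\theta(t)\widetilde{\chi}(x_n)u$ with $\theta,\widetilde{\chi}$ equal to $1$ on the half-sized intervals. By Remark \ref{rk31} the corresponding Carleman estimate applies to $U$, with the necessary regularity inherited from $u$ as in \eqref{eq42}; no cutoff in $x'$ is needed since $\phi$ is independent of $x'$ and $u$ is already compactly supported in $X$. Expand $P_{p/s}U=\theta\widetilde{\chi}P_{p/s}u+(\text{commutator})$. The first summand is absorbed into the left-hand side via the differential inequality \eqref{eq11}/\eqref{eq12} after taking $\tau$ large (rescaling if necessary to handle the top-order constant, as in the proof of Theorem \ref{thm11}). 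The commutator is supported in the transition region
\begin{equation*}
A_r=\bigl(\{r/2\le|t|\le r\}\cup\{r/2\le|x_n|\le r\}\bigr)\times\mathbb{R}^{n-1},
\end{equation*}
intersected with $\mathrm{supp}\,u\subset F\cap X$, which is compact and avoids the origin; on it the geometric claim gives $\phi\le-c_1<0$ for some $c_1>0$, so the commutator contribution to the right-hand side is at most $Ce^{-2\tau c_1}$.

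Combining these pieces yields $\iint e^{2\tau\phi}|U|^2\le Ce^{-2\tau c_1}$ up to powers of $\tau$. On the open neighborhood $V$ of the origin where $\phi(t,x_n)>-c_1/2$ and $\theta\widetilde{\chi}\equiv 1$, one has $U=u$ and $e^{2\tau\phi}\ge e^{-\tau c_1}$, so $\int_V|u|^2\le Ce^{-\tau c_1}\to 0$ as $\tau\to+\infty$, giving $u\equiv 0$ on $V$. The main obstacle is the geometric claim, specifically the uniform control of the curvature mismatch between the $x_n^2/2$ in $\phi$ and the $Nx_n^2/2$ in the defining inequality of $F$: the error $O((\varphi-f)^2)$ must be strictly dominated by the leading terms $(\widetilde{N}/2-C)t^2$ and $f(x')$ after shrinking $r$, which hinges on the bounds $\varphi^2=O(r^2 t^2)$ and $f^2=o(1)\cdot f$ in $B_r$ together with $\widetilde{N}\gg C$.
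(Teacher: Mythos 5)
Your overall strategy differs from the paper's: you keep the quadratic weight $\phi=-\widetilde{N}t^2/2+x_n+x_n^2/2$ of Lemma \ref{lm31}/\ref{lm32} and push the work into a geometric claim about $F$, whereas the paper changes the weight to $\phi=-2\varphi(t)+x_n+N\frac{x_n^2}{2}$ (so that on $F$ one trivially has $\phi\le-\varphi(t)-f(x')$) and re-derives the Carleman estimate for this non-quadratic weight by an integration by parts in $t$ and the vanishing commutator $[D_t+i\tau\partial_t\phi,P_2^m(D_x-i\tau\nabla_x\phi)]=0$. Your route is legitimate in principle, but as written it has a genuine gap at the cutoff stage. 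You cut off only in $(t,x_n)$, asserting that no $x'$-cutoff is needed because ``$u$ is already compactly supported in $X$.'' That is not given: $X$ is merely a bounded neighborhood of $0$ and $\mathrm{supp}\,u\subset F\subset X$ gives neither compact support in $X$ nor support away from the lateral boundary of $X$ (points of $F$ with $x_n$ slightly negative and $f(x')$ small can sit arbitrarily close to $\partial X$ in the $x'$ directions). Consequently $U=\theta(t)\widetilde{\chi}(x_n)u$ is not even defined on the full slab $\{|t|<\delta',|x_n|<\delta'\}\times\mathbb{R}^{n-1}_{x'}$ that Remark \ref{rk31} requires, your set $A_r\cap\mathrm{supp}\,u$ need not be compact, and zero-extension is unjustified. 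This is exactly the point where the local Proposition differs from the global Theorems \ref{thm11}--\ref{thm12}: here you must cut off in all variables, as the paper does with $\chi\in C_c^\infty(\epsilon^{-1}X)$, $\mathrm{supp}\,\chi\subset B(0;\delta)$. Once you do, the $x'$-commutator terms are supported where $|x'|\sim\delta$, and the needed uniform bound $\phi\le-c_1<0$ there does not follow directly from your pointwise estimate $\phi\le-c_0(t^2+f(x'))$ (which degenerates as $t\to0$, $f(x')\to0$); it does follow, but only via the observation that $\phi<0$ pointwise on $F\cap\overline{B}_\delta\setminus\{0\}$ together with continuity of $\phi$ and compactness of $\mathrm{supp}\,u\cap\overline{\{0<\chi<1\}}$ --- an argument you would need to supply (it is essentially how the paper treats its own transition region).

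Two smaller points. Your verification of the geometric claim uses $f^2\le(\sup_{B_r}f)\cdot f=o(1)f$, which presupposes continuity of $f$ at $0$ with $f(0)=0$; the statement only assumes $f\ge0$ (and, strictly, the saddle hypothesis as an implication does not ``force'' $\varphi(0)=f(0)=0$, though that is clearly the intended situation). This can be repaired without continuity by using $|a_+(c)|\le\min(2|c|,r)$ on $F\cap B_r$, so that the curvature-mismatch error is $\le C r\,f(x')$ when $c\le0$ and $\le Cr^2t^2$ when $c>0$, but as written the step relies on an assumption not in the hypotheses --- an issue the paper's choice of weight avoids entirely, since $\varphi$ and the curvature $N$ of $F$ are built into $\phi$. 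Conversely, your route spares you the paper's task of re-proving the Carleman inequality for a non-quadratic weight; if you repair the cutoff in $x'$ (with the compactness argument above) and the treatment of the $f^2$ term, the proof goes through.
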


\begin{proof}
	If $\phi=-2\varphi(t)+x_n+N\frac{x_n^2}{2}$, we first outline for how \eqref{eq31} and \eqref{eq315} still hold with small $\delta$ and large $\tau$ by almost the same proofs of Lemma \ref{lm31} and \ref{lm32}. For the higher order parabolic case, notice that if $f\in C_c^\infty(X)$ and $g=e^{\tau\phi}f$, then
	\begin{equation}\label{eq51}
	\begin{split}
	&\iint e^{2\tau\phi}|P_p(D_t,D_x)f|^2dxdt\\
	=&\iint|i(D_t+i\tau\partial_t\phi)g|^2dxdt+2\mathrm{Re}\iint i(D_t+i\tau\partial_t\phi)g\overline{P_2^m(D_x+i\tau\nabla_x\phi)g}dxdt\\
	&\quad+\iint|P_2^m(D_x+i\tau\nabla_x\phi)g|^2dxdt.
	\end{split}
	\end{equation}
	Integrating by parts shows that
	\begin{equation*}
	\iint|i(D_t+i\tau\partial_t\phi)g|^2dxdt\geq\iint|-i(D_t-i\tau\partial_t\phi)g|^2dxdt-4\tau||\varphi''||_{L^\infty}\iint|g|^2dxdt.
	\end{equation*}
	Since $(\partial_t\phi,\nabla_x\phi)=(-2\varphi'(t),0,\cdots,1+Nx_n)$ and thus the commutator $[D_t+i\tau\partial_t\phi,P_2^m(D_x-i\tau\nabla_x\phi)]=0$, it is obvious from integration by parts that
	\begin{equation*}
	\begin{split}
	&2\mathrm{Re}\iint i(D_t+i\tau\partial_t\phi)g\overline{P_2^m(D_x+i\tau\nabla_x\phi)g}dxdt\\
	=&-2\mathrm{Re}\iint i(D_t-i\tau\partial_t\phi)g\overline{P_2^m(D_x-i\tau\nabla_x\phi)g}dxdt.
	\end{split}
	\end{equation*}
	Plug them back in \eqref{eq51} and apply Lemma \ref{lm21} to $\iint|P_2^m(D_x+i\tau\nabla_x\phi)g|^2dxdt$ in \eqref{eq51}, we then have
	\begin{equation*}
	\begin{split}
	&\iint e^{2\tau\phi}|P_p(D_t,D_x)f|^2dxdt\\
	\geq&\iint|-i(D_t-i\tau\partial_t\phi)g+P_2^m(D_x-i\tau\nabla_x\phi)g|^2dxdt-C\tau\iint|g|^2dxdt\\
	&+C(N)\sum_{k>0}\tau^k\iint|(\partial_n^kP_2^m)(D_x-i\tau\nabla_x\phi)g|^2dxdt.
	\end{split}
	\end{equation*}
	Therefore \eqref{eq31} holds for such $\phi$ if we follow the proof of Lemma \ref{lm31} from line to line. The higher order Schr\"{o}dinger case is completely parallel.
	
	Next, we show the uniqueness. For the higher order parabolic case, we set $u_\epsilon(t,x)=u(\epsilon^{2m}t,\epsilon x)$, then \eqref{eq41} holds in $\epsilon^{-1}X$. Take $\chi\in C_c^\infty(\epsilon^{-1}X)$ such that $\chi\equiv1$ near $0$ with $\mathrm{supp}\,\chi\subset B(0;\delta)$, and let $U_\epsilon=\chi u_\epsilon$. By our regularity assumption, as in Section \ref{sec4}, we can apply \eqref{eq31} to $U_\epsilon$ with $\phi$ that we set at the beginning of this proof:
	\begin{equation}\label{eq52}
	\begin{split}
	&C\sum_{|\alpha|\leq[\frac{3m}{2}]}\tau^{2(\frac{3m}{2}-|\alpha|)}\iint e^{2\tau\phi}|D_x^\alpha U_\epsilon|^2dxdt\\
	\leq&\sum_{|\alpha|\leq[\frac{3m}{2}]}\epsilon^{2(2m-|\alpha|)}\iint_{\chi\equiv1}e^{2\tau\phi}|D_x^\alpha U_\epsilon|^2dxdt\\
	&\quad\quad+\iint_{0<\chi<1}e^{2\tau\phi}|P_p(D_t,D_x)U_\epsilon|^2dxdt.
	\end{split}
	\end{equation}
	The first sum on the right hand side is absorbed if $\epsilon$ is small and $\tau$ is large. Now the support assumption implies that $\mathrm{supp}\,u\cap\{(t,x)\in X_0;~\phi(t,x)\geq0\}\subset\{0\}$ if $X_0\ni0$ is sufficiently small, therefore the last term on the right hand side of \eqref{eq52} is bounded by $Ce^{-2c\tau}$ for some $c>0$. Thus we have
	\begin{equation}
	\tau^{3m}\iint_{\phi\geq-c}|u_\epsilon|^2dxdt\leq C.
	\end{equation}
	Letting $\tau\rightarrow+\infty$ completes the proof, and the high order Schr\"{o}dinger case is parallel.
\end{proof}

We remark that in the second order case $m=1$ as mentioned in the Introduction, Isakov \cite{I} or the earlier Saut and Scheurer \cite{SS} have shown that $P_p(D_t,D_x)$ actually has local unique continuation property stronger than the above Proposition \ref{prop51} claims, which implies that if the solution vanishes in an open set in $\mathbb{R}^{1+n}$, then it vanishes in the horizontal (with respect to the time variable $t$) connected component of this open set where the equation is defined. \cite{SS} also claims such property for $P_p(D_t,D_x)$ in the higher order case, but it seems that the details were given in full. In the case $m=1$ for $P_s(D_t,D_x)$, the initial vanishing requirement of Proposition \ref{prop51} is weaker than the example for Schr\"{o}dinger equations shown in Isakov \cite{I} where, however, the full gradient is allowed in the lower order terms, technically due to that the anisotropic pseudo convexity theory still works somehow.

We finally mention a weak unique continuation result which comes straightforward from the main theorems:

\begin{corollary}
	Let $G$ be a bounded open subset of $\mathbb{R}^n$, and $X=(-T,T)\times G$. Suppose $\partial_x^\alpha u\in L^2(X)$ for $|\alpha|<2m$, $P_p(D_t,D_x)u\in L^2(X)$ and \eqref{eq11} holds in $X$; or suppose $\partial_tu,\partial_x^\alpha u\in L^2(X)$ for $|\alpha|\leq2m$, and \eqref{eq12} holds in $X$. If $\mathrm{supp}\,u\subset (-T,T)\times K$ where $K\Subset G$, then $u\equiv0$ in $X$.
\end{corollary}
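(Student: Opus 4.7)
The strategy is to reduce the statement to the global setting of Theorem \ref{thm11} or Theorem \ref{thm12} by extending $u$ by zero in the spatial variable and applying the main theorems as a black box. Since $K\Subset G$ and $\mathrm{supp}\,u\subset(-T,T)\times K$, the function $u$ vanishes identically on the spacetime neighborhood $(-T,T)\times(G\setminus K)$ of the lateral boundary $(-T,T)\times\partial G$. Hence the trivial extension $\tilde u$ defined by $\tilde u=u$ on $X$ and $\tilde u=0$ on $(-T,T)\times(\mathbb{R}^n\setminus G)$ is a well-defined function on $(-T,T)\times\mathbb{R}^n$ which inherits every regularity and equation hypothesis: $\partial_x^\alpha\tilde u\in L^2((-T,T)\times\mathbb{R}^n)$ for the relevant range of $\alpha$ (with $\partial_t\tilde u\in L^2$ in the Schr\"{o}dinger case), $P\tilde u\in L^2$, and the differential inequality \eqref{eq11} or \eqref{eq12} holds a.e.\ on $(-T,T)\times\mathbb{R}^n$ with the same constant $C$. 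Moreover $\mathrm{supp}\,\tilde u\subset(-T,T)\times K$ remains bounded in $x$.

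The projection of the bounded set $K$ onto the $x_n$-axis is a bounded interval $[a,b]$. After a translation in $x_n$, which preserves both operators and both differential inequalities because $P_p$, $P_s$ have constant coefficients and the right-hand sides depend on $x_n$ only through $\partial_{x_n}$, we may assume $b<0$, so that $\tilde u\equiv 0$ on $\{x_n>0\}$. Choose any $B>|a|$, so that $K$ has its $x_n$-coordinates in $[a,b]\subset(-B,B)$. On the slab $Y=\{|t|<T,\,|x_n|<B\}$ the function $\tilde u$ satisfies all the hypotheses of Theorem \ref{thm11} (respectively Theorem \ref{thm12}) with $A=T$, together with the half-space vanishing $\tilde u=0$ for $x_n>0$. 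The theorem therefore yields $\tilde u\equiv 0$ on $Y$.

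Combining this with $\tilde u\equiv 0$ on $\{|x_n|\geq B\}$ (which follows from $[a,b]\subset(-B,B)$ and $\mathrm{supp}\,\tilde u\subset(-T,T)\times K$) gives $\tilde u\equiv 0$ on all of $(-T,T)\times\mathbb{R}^n$; restricting to $X$ yields $u\equiv 0$, as desired. The only non-bookkeeping step is the zero-extension: the strict compactness $K\Subset G$ is precisely what guarantees that $u$ already vanishes in an $L^2$-neighborhood of $\partial G$, so that $\tilde u$ produces no distributional jump across $\partial G$ and transports the equation and the pointwise inequality verbatim to the enlarged domain. Once this is observed the corollary is an immediate application of the main theorems, and I expect this extension check to be the only mild obstacle.
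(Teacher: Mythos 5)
Your proposal is correct and is essentially the paper's own argument: the paper likewise extends $u$ by zero to $(-T,T)\times\mathbb{R}^n$ (legitimate precisely because $\mathrm{supp}\,u\subset(-T,T)\times K$ with $K\Subset G$, so no boundary terms arise) and then invokes Theorem \ref{thm11} or Theorem \ref{thm12}. Your additional bookkeeping (translating in $x_n$ so the support lies in $\{x_n<0\}$ and choosing the slab width $B$) just makes explicit the application of the theorems that the paper leaves implicit.
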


\begin{proof}
	The regularity and support assumptions allow us to zero extend $u$ to $(-T,T)\times\mathbb{R}^n$, then the Corollary is implied by Theorem \ref{thm11} or Theorem \ref{thm12}.
\end{proof}

\noindent
\section*{Acknowledgements}
The author was visiting The University of Chicago (from Huazhong University of Science and Technology, supported by the China Scholarship Council,) while this research was carried out, and he thanks Carlos E. Kenig for fruitful discussion on this work. The author also thanks Quan Zheng for some helpful discussion on related topics, and the anonymous referee for the advice on the exposition of this manuscript.


\begin{thebibliography}{plain}

\bibitem{C} T. Carleman, Sur un probl\`{e}me d'unicit\'{e} pour les syst\`{e}mes d'\'{e}quations aux d\'{e}riv\'{e}es partielles \`{a} deux variables ind\'{e}pendentes, {\it Ark. Mat., Astr. Fys.} 26 No 17 (1939), 1-9.

\bibitem{CG} F. Colombini and C. Grammatico, Some remarks on strong unique continuation for the Laplace operator and its powers, {\it Comm. Partial Differential Equations} 24 (1999), 1079-1094.

\bibitem{CK} F. Colombini and H. Koch, Strong unique continuation for products of elliptic operators of second order, {\it Tran. Amer. Math. Soc.} 362 (2010), 345-355.

\bibitem{EKPV} L. Escauriaza, C.E. Kenig, G. Ponce and L. Vega, Uniqueness properties of solutions to Schr\"{o}dinger equations, {\it Bull. Amer. Math. Soc.} 49 (2012), 415-442.

\bibitem{H1} L. H\"{o}rmander, The Analysis of Linear Partial Differential Operators I 2nd ed., {\it Springer-Verlag} Berlin, 1990. 

\bibitem{H3} L. H\"{o}rmander, The Analysis of Linear Partial Differential Operators III, {\it Springer-Verlag} Berlin, 1985.

\bibitem{H4} L. H\"{o}rmander, The Analysis of Linear Partial Differential Operators IV, {\it Springer-Verlag} Berlin, 1985.

\bibitem{I} V. Isakov, Carleman type estimates in an anisotropic case and applications, {\it J. Differential Equations} 105 (1993), 217-238.

\bibitem{IK} A.D. Ionescu and C.E. Kenig, $L^p$ Carleman inequalities and uniqueness of solutions of nonlinear Schr\"{o}dinger equations, {\it Acta Math.} 193 (2004), 193-239.

\bibitem{KPV} C.E. Kenig, G. Ponce and L. Vega, On unique continuation for nonlinear Schr\"{o}dinger equations, {\it Comm. Pure Appl. Math.} 56 (2003), 1247-1262.

\bibitem{L} N. Lerner, Carleman's and subelliptic estimates, {\it Duke Math. J.} 56 (1988), 385-394.

\bibitem{RR} J.Le Rousseau and L. Robbiano, Spectral inequality and resolvent estimate for the bi-Laplace operator, {\it J. Eur. Math. Soc.} 22 (2020), 1003-1094.

\bibitem{P} M.H. Protter, Unique continuation for elliptic equations, {\it Tran. Amer. Math. Soc.} 95 (1960), 81-91.

\bibitem{SS} J.C. Saut and B. Scheurer, Unique continuation for some evolution equations, {\it J. Differential Equations} 66 (1987), 118-139.

\bibitem{Z} C. Zuily, Uniqueness and non-uniqueness in the Cauchy problem, {\it Progress in Math.} 33, Birkh\"{a}user, Boston, Basel, Stuttgart 1983.



\end{thebibliography}
\end{document}